\newcounter{TmpEnumi}
\title[Mean cohomological independence dimension]{Radius
 of comparison and mean cohomological independence dimension}
\author{Ilan Hirshberg
\and
N. Christopher Phillips}
\address{Department of Mathematics, Ben Gurion University of the Negev,
\phantom{----------------}\linebreak\text{}\hspace{3.5mm}
P.O.B. 653, Be'er Sheva 84105, Israel}
\email{ilan@math.bgu.ac.il}
\address{Department of Mathematics, University  of Oregon,
\phantom{---------------------------------}\linebreak\text{}\hspace{3.5mm}
Eugene OR 97403-1222, USA.}
\thanks{This research was supported by Israel Science Foundation
  grant 476/16 and the Simons Foundation Collaboration Grant
  for Mathematicians \#587103.}
\date{27~September 2020}
\theoremstyle{plain}
\newtheorem{Thm}{Theorem}[section]
\newtheorem{Cor}[Thm]{Corollary}
\newtheorem{Lemma}[Thm]{Lemma}
\newtheorem{Prop}[Thm]{Proposition}
\theoremstyle{definition}
\newtheorem{Def}[Thm]{Definition}
\newtheorem{Notation}[Thm]{Notation}
\newtheorem{Exl}[Thm]{Example}
\newtheorem{Rmk}[Thm]{Remark}
\newtheorem{Question}[Thm]{Question}
\newcommand{\andeqn}{\qquad {\mbox{and}} \qquad}
\newcommand{\R}{{\mathbb{R}}}
\newcommand{\N}{{\mathbb{N}}}
\newcommand{\Z}{{\mathbb{Z}}}
\newcommand{\Q}{{\mathbb{Q}}}
\newcommand{\cN}{{\mathcal{N}}}
\newcommand{\cS}{{\mathcal{S}}}
\newcommand{\cU}{{\mathcal{U}}}
\newcommand{\cV}{{\mathcal{V}}}
\newcommand{\cW}{{\mathcal{W}}}
\newcommand{\calD}{{\mathcal{D}}}
\newcommand{\ch}{\widecheck{H}}
\newcommand{\crd}{\mathrm{Card}}
\newcommand{\ord}{\mathrm{ord}}
\newcommand{\aut}{\mathrm{Aut}}
\newcommand{\supp}{\mathrm{supp}}
\newcommand{\eps}{\varepsilon}
\numberwithin{equation}{section}
\newcommand{\rc}{\mathrm{rc}}
\newcommand{\id}{\mathrm{id}}
\newcommand{\mdim}{\mathrm{mdim}}
\newcommand{\mcid}{\mathrm{mcid}}
\newcommand{\mcidgs}{\mathrm{smcid}}
\newcommand{\Ch}{\mathrm{Ch}}
\newcommand{\td}{d}
\newcommand{\rank}{\mathrm{rank}}
\newcommand{\Inv}{\mathrm{Inv}}
\newcommand{\af}{\alpha}
\newcommand{\dt}{\delta}
\newcommand{\ep}{\varepsilon}
\newcommand{\et}{\eta}
\newcommand{\io}{\iota}
\newcommand{\ld}{\lambda}
\newcommand{\sm}{\sigma}
\newcommand{\rh}{\rho}
\renewcommand{\S}{\subset}
\newcommand{\I}{\infty}
\newcommand{\Act}{T}
\begin{document}

\begin{abstract}
We introduce a notion of mean cohomological independence dimension
for actions of discrete amenable groups on compact metrizable spaces,
as a variant of mean dimension,
and use it to obtain lower bounds for the radius of comparison
of the associated crossed product $C^*$-algebras.
Our general theory,
gives the following for the minimal subshifts constructed by Dou
in 2017.
For any countable amenable group~$G$ and any polyhedron~$Z$,
Dou's subshift $T$ of $Z^G$ with density parameter $\rh$
satisfies
\[
\rc (C (X) \rtimes_{\Act} G)
 > \frac{1}{2} \mdim (T) \left( 1 - \frac{1 - \rh}{\rh} \right) - 2.
\]
If $k = \dim (Z)$ is even and $\ch^k (Z; \Q) \neq 0$,
then
\[
\rc (C (X) \rtimes_{\Act} G) > \frac{1}{2} \mdim (T) - 1,
\]
regardless of what $\rh$ is.
\end{abstract}

\maketitle

The notion of mean dimension was outlined by Gromov in \cite{Gromov},
and later fleshed out in a paper
of Lindenstrauss and Weiss \cite{LinWeiss2000}.
The general philosophy outlined in Gromov's paper
was that given an invariant $\Inv(X)$ for spaces $X$,
one can try to define a dynamical variant $\Inv(X;G)$
for actions of groups on $X$,
which should, as a test case, for the full shift
roughly satisfy $\Inv(X^G;G) = \Inv(X)$.
Of course, various restrictions may be placed on the spaces,
on the groups, or on the actions.
Entropy, for instance,
can be thought of as a dynamical way to count cardinality.
The mean dimension $\mdim (X, G)$
is a dynamical variant of covering dimension.
For actions of~$\Z$, see Definition~2.6 of \cite{LinWeiss2000};
for amenable groups, see the remarks after this definition
and the discussion of this case
in the introduction to \cite{LinWeiss2000}.
One of the motivating applications was to show that
not every dynamical system of the form $(X, \Z)$ can be embedded
into the full shift on $[0,1]^{\Z}$.
If $T$ denotes the action of $G$ on $X$,
we sometimes write $\mdim(T)$ in place
of $\mdim (X,G)$.

The notion of radius of comparison for $C^*$-algebras
was introduced by Toms in \cite{Toms-rc},
as a way to systematize the counterexamples
to the Elliott program he constructed in \cite{Toms-counterexample},
based on techniques introduced first by Villadsen in \cite{Villadsen}.
Let $A$ be a unital stably finite $C^*$-algebra.
Let $\tau$ be a tracial state on $A$.
By slight abuse of notation,
we also use $\tau$ to denote the induced trace on $M_{\infty}(A)$.
For a positive element $a \in M_n(A)$,
we set $d_{\tau}(a) = \lim_{n \to \infty} \tau(a^{1/n})$.
For $r>0$,
we say that $A$ has \emph{$r$-comparison}
if for any two positive elements $a,b \in M_{\infty}(A)$,
if $d_{\tau}(a) + r < d_{\tau}(b)$ for all tracial states $\tau$ on $A$
then $a \precsim b$ ($a$ is Cuntz-subequivalent to $b$).
(In general, one should use quasitraces here,
but the $C^*$-algebras in this paper will be nuclear,
so that all quasitraces are tracial states by \cite{Haagerup}.)
The \emph{radius of comparison} of $A$ is the infimum of all $r>0$
such that $A$ has $r$-comparison.
Toms' counterexample is of a simple AH algebra
which has positive radius of comparison but otherwise
has the same Elliott invariant as an AI algebra
(which has zero radius of comparison).
Recent major advances in the study of classification theory
for nuclear
$C^*$-algebras (\cite{elliott-gong-lin-niu,tikuisis-white-winter}),
building on decades of work by many authors,
show that simple nuclear unital $C^*$-algebras
satisfying the Universal Coefficient Theorem are classified via
the Elliott invariant provided they have finite nuclear dimension.
Conjecturally, this corresponds to the case of
zero radius of comparison;
this has been proved when the tracial state space of $A$
is a Bauer simplex
whose extreme boundary has finite covering
dimension (\cite{kirchberg-rordam,sato,toms-white-winter}).

The connection to dynamical systems was broached by Giol and Kerr
in \cite{giol-kerr},
where they constructed examples of minimal homeomorphisms
whose crossed products have positive radius of comparison.
The examples in the paper of Giol and Kerr
have positive mean dimension.
That the spaces themselves had to be infinite dimensional
follows from the fact that for minimal homeomorphisms
of finite dimensional spaces,
the crossed product has finite nuclear dimension
and hence has zero radius of comparison (\cite{TomsWinter2013};
see also \cite{HirshbergWinterZacharias}).
This suggested a connection
between mean dimension and radius of comparison,
two notions which came about independently and in different contexts.
It has been conjectured by the second named author and Toms
that for minimal systems,
the radius of comparison should be half the mean dimension.
The second named author showed in \cite{Phillips2016}
that for minimal actions $T$ of the integers,
the radius of comparison of the crossed product is bounded above
by $1 + 36 \mdim (T)$.
Elliott and Niu showed in \cite{ElliottNiu2017}
that for minimal actions of the integers,
mean dimension zero implies zero radius of comparison.
Recently, Niu (\cite{Niu2019Z,Niu2019Zd})
improved those results
and showed that for free and minimal actions of $\Z^d$,
the radius of comparison of the crossed product
is at most half the mean dimension.
Those results mark very significant progress on this problem,
but they all involve bounding the radius of comparison from above.
For lower bounds,
the only results we are aware of to date
are for the examples constructed in the paper of Giol and Kerr.

The goal of this paper is to establish lower bounds
(Theorem~\ref{T_0826_Main} and Theorem~\ref{T_0907_gs}).
In the case of commutative $C^*$-algebras,
lower bounds for the radius of comparison of $C(X)$
were obtained in \cite{ElliottNiu2013}
in terms of rational cohomological dimension
rather than covering dimension.
For $t \in \R$,
we denote by $\lfloor t \rfloor$ the greatest integer $n$
such that $n \leq t$.
When covering dimension and rational cohomological dimension coincide,
the radius of comparison of $C(X)$ is $\lfloor \dim(X)/2 \rfloor - 1$
or $\lfloor \dim(X)/2 \rfloor - 2$;
it is not known whether the latter can occur.
We refer to \cite{DranishnikovSurvey}
for a survey of cohomological dimension.

The work of Elliott and Niu \cite{ElliottNiu2013} suggests that,
in order to obtain lower bounds in the dynamical context,
rather than using the Lindenstrauss-Weiss notion of mean dimension,
which is based on covering dimension,
we might look for a notion of ``mean cohomological dimension''.
Recall that a compact metrizable space $X$
has rational cohomological dimension $d$
if $d$ is the least integer such that for any $k > d$
and for any closed subset $Y \subset X$,
we have $\ch^k(X, Y; \Q) = 0$
(relative \v{C}ech cohomology with rational coefficients).

Instead of relative cohomology,
for our purposes it turns out to be better to work
with cohomology of subspaces: given a space $X$,
for any $k$ we can ask whether there exists a closed subset $Y$ of $X$
with non-vanishing $k$-th rational
cohomology.
We could define a notion of the dimension of~$X$ as
the supremum of all $k \in \N$
such that this holds.
Such a notion does not quite coincide with covering dimension
for CW complexes.
(For instance, the dimension of the three dimensional ball
would be $2$ rather than $3$.)
However, in the context of mean dimension,
it sometimes does not matter
if the dimension it is based on is off by a constant.
For technical reasons, we actually consider only even integers~$k$.
This is related to the fact that we work with complex vector bundles
and Chern classes; more philosophically,
it reflects the fact that the radius of comparison should be thought of
as a sort of complex dimension rather than real dimension,
which explains the factor of $1/2$
which appears when comparing it to mean dimension.

In fact, it turns out to be more useful to view this as
a sequence of invariants:
for any $k$,
we could ask whether there exists a subspace
with non-vanishing $k$-th rational cohomology.
We think of cohomology classes $\eta_1,\eta_2,\ldots,\eta_n$
as being ``independent'' if their cup product is nonzero.
Roughly speaking,
given an action of an amenable group $G$ on $X$,
and given a cohomology class $\eta$ of a subspace,
for any finite set $F$ of $G$,
we can find the largest subset $F_0$ such that
the iterates of $\eta$ under $F_0$ are independent in this sense,
and then measure the upper density of such sets in a F{\o}lner sequence.
This is used as a basis for
our notion of \emph{mean cohomological independence dimension}
(Definition~\ref{Def_mcid_Rev}).
For full shifts on a CW complex $Z$,
under a mild condition on the group,
our dynamical invariant recovers the dimension of $Z$,
thereby meeting the rule of thumb
suggested in Gromov's paper \cite{Gromov}.
There is a related but somewhat different notion of
mean homological dimension in Section 2.6.3 of \cite{Gromov}.
It applies specifically to subshifts,
and is used there to get lower bounds on the mean dimension.
The reader may also find some analogy between the connection
of mean dimension
with our notion of independence dimension
and the connection of entropy with combinatorial independence
which was studied by Kerr and Li (\cite{KerrLi2007}),
although we do not use it in any way in this paper.

In Section~\ref{Sec_0922_mcid}, we give some preliminaries
and define the mean cohomological independence dimension.
Section~\ref{Sec_0922_SbShift}
contains estimates of the mean cohomological independence dimension
of shifts and certain kinds of subshifts.
Section~\ref{Sec_0908_LB} contains the main theorem,
giving a lower bound on $\rc (C (X) \rtimes_{\Act} G)$
in terms of mean cohomological independence dimension.
When applied to a minimal subshift $T$ of the shift on $Z^G$,
as constructed in \cite{Dou2017},
using a polyhedron~$Z$ and density parameter~$\rh$,
this result implies (Corollary~\ref{C_0926_Shift_Gen})
\[
\rc (C (X) \rtimes_{\Act} G)
 > \frac{1}{2} \mdim (T) \left( 1 - \frac{1 - \rh}{\rh} \right) - 2.
\]
Up to an additive constant,
this estimate is close to the conjectured value $\frac{1}{2} \mdim (T)$
when $\rh$ is close to~$1$,
but is useless if $\rh \leq \frac{1}{2}$.
In Section \ref{Sec_0908_Symm},
we introduce a variant of our definition, which
we call \emph{symmetric mean cohomological independence dimension}.
This involves a stronger independence condition,
which allows us to obtain
improved bounds for certain dynamical systems,
such as subshifts of $(S^k)^G$ for $k$ even.
In particular, for the construction of \cite{Dou2017}
in this case, one gets (Corollary~\ref{C_0907_Shift_Sk})
\[
\rc (C (X) \rtimes_{\Act} G) > \frac{1}{2} \mdim (T) - 1,
\]
regardless of the value of~$\rh$,
which is useful whenever $\rh > \frac{2}{k}$.
In Section~\ref{Sec_0922_CR}, we state some open problems.

\section{Mean cohomological independence dimension}\label{Sec_0922_mcid}

We begin by fixing some notation.
Throughout this paper,
$X$ is a compact metrizable space,
$G$ is a countable amenable discrete group,
and $\Act$ is an action of $G$ on~$X$.
When needed,
we let $\alpha \colon G \to \aut(C(X))$ be the corresponding action
of $G$ on $C (X)$,
given by $\alpha_g (f) (x) = f (\Act_g^{-1} (x))$.
We usually write the crossed product
$C (X) \rtimes_{\alpha} G$ as $C (X) \rtimes_{\Act} G$.
(Since $G$ is amenable,
the full and reduced crossed products are the same.)

\begin{Def}\label{D_0814_CoverY}
Let $X$ be a compact metrizable space,
and let $Y \S X$ be closed.
A {\emph{finite open cover of $Y$ in~$X$}}
is a finite collection of
nonempty open subsets of~$X$ whose union contains~$Y$.
We often omit mention of~$X$ when it is understood.
We denote by $\cN (\cU)$ the nerve of~$\cU$.
\end{Def}

To emphasize: a finite open cover of $Y$
consists of subsets of~$X$ which are open in~$X$,
not of open subsets of~$Y$.
For this and the following definitions
(but not for some of the lemmas),
there is no reason not to use arbitrary topological spaces~$X$
and arbitrary subsets~$Y$.

We exclude $\varnothing$ from covers to avoid later
improperly claiming that
$\cU \cup \{ \varnothing \} = \cU$.

The less convenient alternative is to work with
finite open covers of various closed sets~$Y$ in the traditional sense.
The outcome will be the same;
see Lemma~\ref{L_0815_CoverY} and Lemma~\ref{L_0820_DimSame} below.

We now give definitions which are standard for open covers,
slightly modified for our present situation.

\begin{Def}\label{D_0815_Join}
Let $X$ be a compact metrizable space,
and let $\cU_1$ and $\cU_2$ be collections of nonempty open sets in~$X$.
Then their {\emph{join}} is
\[
\cU_1 \vee \cU_2
 = \bigl\{ U_1 \cap U_2 \mid {\mbox{$U_1 \in \cU_1$,
     $U_2 \in \cU_2$, and $U_1 \cap U_2 \neq \varnothing$}} \bigr\}.
\]
\end{Def}

By iteration, we get the join
of any finite set of collections of open sets.
If $\cU_1, \cU_2, \ldots, \cU_n$ are
finite open covers in~$X$
of closed subsets $Y_1, Y_2, \ldots, Y_n \S X$,
then $\cU_1 \vee \cU_2 \vee \cdots \vee \cU_n$
is a finite open cover of $Y_1 \cap Y_2 \cap \cdots \cap Y_n$ in~$X$.

\begin{Def}\label{D_0808_Rfn}
Let $X$ be a compact metrizable space,
let $Y \S X$ be closed,
and let $\cU$ and $\cV$ be finite open covers of~$Y$.
Then $\cV$ {\emph{refines $\cU$}} (as a cover of~$Y$;
written $\cV \prec_Y \cU$)
if
for every $V \in \cV$ there is $U \in \cU$
such that $V \subset U$.
\end{Def}

Formally, the only role that $Y$ plays is that we are
restricting to finite open covers of~$Y$.

\begin{Def}\label{D_0807_Covers}
Let $X$ be a compact metrizable space,
let $Y \S X$ be closed,
and let $\cU$ be a finite open cover of~$Y$ in~$X$.
The {\emph{order of $\cU$}} is
\[
\ord (\cU)
 = \max_{x \in X}
      \crd \bigl( \{U \in \cU \mid x \in U\} \bigr) - 1.
\]
We denote by $\calD_Y (\cU)$
the least order of any refinement of $\cU$ among
finite open covers of~$Y$ in~$X$.
\end{Def}

In the situation of Definition~\ref{D_0807_Covers},
the order of $\cU$ is the dimension of $\cN (\cU)$.

While the subset~$Y$ is formally irrelevant in
Definition~\ref{D_0808_Rfn},
the quantity $\calD_Y (\cU)$ depends strongly on~$Y$.

\begin{Notation}\label{N_0820_Int}
Let $X$ be a compact metrizable space,
let $Y \S X$ be closed,
and let $\cU$ be a finite open cover of~$Y$ in~$X$.
We set
\[
\cU \cap Y
 = \bigl\{ U \cap Y \mid
   {\mbox{$U \in \cU$ and $U \cap Y \neq \varnothing$}} \bigr\},
\]
which is a finite open cover of~$Y$,
regarded as a topological space in its own right.
\end{Notation}

\begin{Lemma}\label{L_0815_CoverY}
Let $X$ be a compact metrizable space,
let $Y \S X$ be closed,
and let $\cV$ be a finite open cover of~$Y$,
regarded as a topological space in its own right.
Then there is a finite open cover $\cW$ of~$Y$ in~$X$
such that $\ord ( \cW ) \leq \ord ( \cV )$
and $\cW \cap Y$ refines $\cV$
in the conventional sense for open covers of~$Y$ in~$Y$.
\end{Lemma}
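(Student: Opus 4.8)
The plan is to factor the cover $\cV$ through its nerve and then transport the resulting order bound back to $X$ by an extension argument. The reason a direct construction fails, and the crux of the lemma, is order control: if one simply writes each $V_i$ as $O_i \cap Y$ for an open $O_i \S X$, then two members of $\cV$ that are disjoint in $Y$ may have extensions that overlap off $Y$, so the order of $\{O_i\}$ over $X$ can exceed $\ord(\cV)$. The remedy is to route everything through the $\ord(\cV)$-dimensional nerve, whose canonical star cover has order exactly $\ord(\cV)$ by construction.

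Write $d = \ord(\cV)$ and $\cV = \{V_1, \dots, V_n\}$. First I would choose a partition of unity $(\ph_i)_{i=1}^n$ on $Y$ subordinate to $\cV$, so each $\ph_i \colon Y \to [0,1]$ is continuous with $\{\ph_i > 0\} \S V_i$ and $\sum_i \ph_i = 1$. Let $K = \cN(\cV)$, with vertices $v_1, \dots, v_n$; its geometric realization $|K|$ is a finite simplicial complex, and $\dim |K| = d$ because the simplices of $K$ are exactly the subsets $S$ with $\bigcap_{i \in S} V_i \neq \varnothing$, whose maximal size is $d + 1$. The barycentric-coordinate map $f \colon Y \to |K|$, $f(y) = \sum_i \ph_i(y) v_i$, is continuous and well defined (the indices $i$ with $\ph_i(y) > 0$ span a simplex, since $y$ lies in all the corresponding $V_i$). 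Writing $\mathrm{St}(v_i)$ for the open star of $v_i$, one has $f^{-1}(\mathrm{St}(v_i)) = \{\ph_i > 0\} \S V_i$. The key combinatorial fact is that the star cover $\{\mathrm{St}(v_i)\}_{i=1}^n$ of $|K|$ has order $\dim |K| = d$: a point in the relative interior of the simplex spanned by $v_{i_0}, \dots, v_{i_k}$ lies in precisely the stars indexed by $i_0, \dots, i_k$, and $k + 1 \leq d + 1$.

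Next I would extend $f$ off $Y$. Since $|K|$ is a finite polyhedron it is an ANR, so the Borsuk neighborhood extension theorem gives an open $U \S X$ with $Y \S U$ and a continuous $F \colon U \to |K|$ extending $f$. Concretely, one may embed $|K| \hookrightarrow \R^N$, extend the coordinate functions of $f$ by the Tietze theorem to $\widetilde{F} \colon X \to \R^N$, choose a neighborhood $\Om$ of $|K|$ in $\R^N$ admitting a retraction $r \colon \Om \to |K|$, and set $U = \widetilde{F}^{-1}(\Om)$ and $F = r \circ \widetilde{F}|_U$; continuity together with $\widetilde{F}(Y) \S |K|$ ensures $Y \S U$. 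Now put $\cW = \{ F^{-1}(\mathrm{St}(v_i)) : 1 \leq i \leq n, \ F^{-1}(\mathrm{St}(v_i)) \neq \varnothing \}$, a finite collection of nonempty open subsets of $X$ (open because $U$ is open in $X$).

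Finally I would verify the three required properties. Covering: for $y \in Y$ we have $f(y) \in \mathrm{St}(v_i)$ for some $i$, whence $y \in F^{-1}(\mathrm{St}(v_i))$, so $\cW$ covers $Y$ in $X$. Order: a point of $X$ lies in no member of $\cW$ unless it lies in $U$, and for $x \in U$ the number of $i$ with $x \in F^{-1}(\mathrm{St}(v_i))$ equals the number of stars containing $F(x)$, which is at most $d + 1$; equivalently, any $(d+2)$-fold intersection $\bigcap_{i \in S} F^{-1}(\mathrm{St}(v_i)) = F^{-1}\bigl( \bigcap_{i \in S} \mathrm{St}(v_i) \bigr)$ is empty, so $\ord(\cW) \leq d = \ord(\cV)$. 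Refinement: since $F|_Y = f$, we get $F^{-1}(\mathrm{St}(v_i)) \cap Y = f^{-1}(\mathrm{St}(v_i)) \S V_i$, so every member of $\cW \cap Y$ lies in a member of $\cV$, which is exactly the assertion that $\cW \cap Y$ refines $\cV$ in the conventional sense. The only non-elementary ingredient is the neighborhood-extension step; the order bound, which is the genuine content, is handled automatically once the construction is funneled through the $d$-dimensional nerve, because pulling the star cover back through a continuous map cannot raise its order.
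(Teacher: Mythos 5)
Your proof is correct, but it takes a genuinely different route from the paper's. You funnel the cover through its nerve: a partition of unity gives the barycentric map $f \colon Y \to |\cN(\cV)|$, the Borsuk/ANR neighborhood extension theorem extends $f$ to $F$ on an open neighborhood of $Y$ in $X$, and pulling back the open star cover of the $d$-dimensional nerve makes the order bound automatic, since any $(d+2)$-fold intersection of stars is already empty upstairs. The paper instead argues entirely by elementary point-set topology: it extends each $V_j$ to an open $U_j \S X$, exhausts each $U_j$ from within by sets $U_{j,l}$ with compact closures, squeezes by a decreasing sequence of neighborhoods $Z_m$ of $Y$ with $\bigcap_m Z_m = Y$, and uses compactness to pick a single stage $m$ at which every index set $J$ with $\bigcap_{j \in J} V_j = \varnothing$ already satisfies $\bigcap_{j \in J} \bigl( {\overline{U_{j,l_0}}} \cap {\overline{Z_m}} \bigr) = \varnothing$; setting $W_j = U_{j,l_0} \cap Z_m$ then forces the intersection pattern of $\cW$ to be no worse than that of $\cV$. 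Your version buys a conceptually transparent order bound (inherited from $\dim |\cN(\cV)|$) and a cover whose nerve maps into $\cN(\cV)$, at the cost of invoking that finite polyhedra are ANRs; the paper's version is longer to write down but completely self-contained and uses nothing beyond compactness and normality. Both arguments are valid and yield the stated conclusion.
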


\begin{proof}
Write $\cV = \{ V_1, V_2, \ldots, V_n \}$
with $V_1, V_2, \ldots, V_n$ distinct.
Choose open subsets $U_1, U_2, \ldots, U_n \S X$
such that $V_j = U_j \cap Y$ for $j = 1, 2, \ldots, n$.
Choose open subsets $U_{j, l} \S X$ for $l = 1, 2, \ldots$ with
\[
U_{j, 1} \S {\overline{U_{j, 1}}}
 \S U_{j, 2} \S {\overline{U_{j, 2}}} \S \cdots \S U_j
\andeqn
\bigcup_{l = 1}^{\I} U_{j, l} = U_j.
\]
Then
\[
Y \subset \bigcup_{l = 1}^{\I}
 \left( \bigcup_{j = 1}^{n} U_{j, l}  \right),
\]
so, by compactness,
there is $l_0 \in \{ 1, 2, \ldots \}$ such that
$Y \S \bigcup_{j = 1}^{n} U_{j, l_0}$.

Choose open subsets $Z_1, Z_2, \ldots \S X$ such that
\[
Z_1 \supset {\overline{Z_2}}
 \supset Z_2 \supset {\overline{Z_3}} \supset \cdots \supset Y
\andeqn
\bigcap_{m = 1}^{\I} Z_m = Y.
\]
Let $\cS$ be the set of all subsets $J \S \{ 1, 2, \ldots, n \}$
such that $\bigcap_{j \in J} V_j = \varnothing$.
For $J \in \cS$, we have
\[
\bigcap_{m = 1}^{\I} \left( \bigcap_{j \in J}
    \bigl( {\overline{U_{j, l_0} }} \cap {\overline{Z_m}} \bigr) \right)
 = \varnothing.
\]
Therefore there exists $m_J \in \{ 1, 2, \ldots \}$
such that
\[
\bigcap_{j \in J}
    \bigl( {\overline{U_{j, l_0} }} \cap {\overline{Z_{m_J} }} \bigr)
 = \varnothing.
\]
Define $m = \max_{J \in \cS} m_J$.
For $j = 1, 2, \ldots, n$ define $W_j = U_{j, l_0} \cap Z_{m}$,
and set $\cW = \{ W_1, W_2, \ldots, W_n \}$.
One easily checks that $\cW$ satisfies the conclusion of the lemma.
\end{proof}

\begin{Lemma}\label{L_0820_DimSame}
Let $X$ be a compact metrizable space,
let $Y \S X$ be closed,
and let $\cU$ be a finite open cover of~$Y$ in~$X$.
Then $\calD_Y (\cU) = \calD_Y ( \cU \cap Y )$.
\end{Lemma}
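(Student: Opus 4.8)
The plan is to establish the two inequalities $\calD_Y (\cU \cap Y) \leq \calD_Y (\cU)$ and $\calD_Y (\cU) \leq \calD_Y (\cU \cap Y)$ separately, where $\calD_Y (\cU \cap Y)$ is read as the analogous quantity computed inside the space $Y$: the least order, measured over points of~$Y$, of a refinement of $\cU \cap Y$ among finite open covers of $Y$ in~$Y$. Throughout I will use two elementary monotonicity facts about the order. First, shrinking the members of a cover cannot increase its order, since for each point the set of members containing it can only shrink. Second, for a finite open cover $\cW$ of $Y$ in~$X$, passing to $\cW \cap Y$ cannot increase the order: for $y \in Y$ the members of $\cW \cap Y$ containing $y$ are exactly the distinct sets $W \cap Y$ with $y \in W$, and there are at most as many of these as there are $W \in \cW$ with $y \in W$, hence at most $\max_{x \in X} \crd \bigl( \{ W \in \cW \mid x \in W \} \bigr)$.

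For the first inequality, I would take a finite open cover $\cW$ of $Y$ in~$X$ with $\cW \prec_Y \cU$ and $\ord (\cW) = \calD_Y (\cU)$. Whenever $W \subset U$ with $U \in \cU$ we get $W \cap Y \subset U \cap Y$, so $\cW \cap Y$ refines $\cU \cap Y$ as covers of $Y$ in~$Y$; combined with the second monotonicity fact, $\ord (\cW \cap Y) \leq \ord (\cW)$, this yields $\calD_Y (\cU \cap Y) \leq \ord (\cW \cap Y) \leq \calD_Y (\cU)$.

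The reverse inequality is where the real work lies, and it is exactly where Lemma~\ref{L_0815_CoverY} is meant to be used. Starting from a finite open cover $\cV$ of $Y$ in~$Y$ with $\cV \prec \cU \cap Y$ and $\ord (\cV) = \calD_Y (\cU \cap Y)$, I would invoke Lemma~\ref{L_0815_CoverY} to produce a finite open cover $\cW$ of $Y$ in~$X$ with $\ord (\cW) \leq \ord (\cV)$ and $\cW \cap Y$ refining $\cV$. The main obstacle is that Lemma~\ref{L_0815_CoverY} controls only $\cW \cap Y$, so $\cW$ itself need not refine $\cU$ as a cover in~$X$, which is precisely what the definition of $\calD_Y (\cU)$ demands. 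To repair this, discard any members of $\cW$ disjoint from~$Y$ (which keeps a cover of $Y$ and cannot raise the order); then each remaining $W \in \cW$ satisfies $W \cap Y \subset V$ for some $V \in \cV$ and $V \subset U \cap Y$ for some $U \in \cU$, so $W \cap Y \subset U$. I would then replace each such $W$ by $W \cap U$. The resulting collection $\cW'$ still covers $Y$, since each $W \cap U$ contains $W \cap Y$; it satisfies $\cW' \prec_Y \cU$, since each member is contained in a member of~$\cU$; and by the first monotonicity fact shrinking did not increase the order. Hence $\calD_Y (\cU) \leq \ord (\cW') \leq \ord (\cW) \leq \ord (\cV) = \calD_Y (\cU \cap Y)$, and combining the two inequalities gives the equality.
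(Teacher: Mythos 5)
Your proof is correct and follows essentially the same route as the paper's: the easy direction via intersecting a minimal-order refinement of $\cU$ with $Y$, and the harder direction via Lemma~\ref{L_0815_CoverY} followed by replacing each $W$ with $W \cap U$ for a suitable $U \in \cU$ (the paper's $\cW = \{ W \cap U_W \}$). Your explicit discarding of members disjoint from $Y$ is a minor point the paper leaves to the reader, but the argument is the same.
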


The expression $\cU \cap Y$ is as in Notation~\ref{N_0820_Int},
and $\calD_Y ( \cU \cap Y )$ is what is usually called
$\calD ( \cU \cap Y )$, taken among finite open covers of~$Y$
as a topological space in its own right.

\begin{proof}[Proof of Lemma~\ref{L_0820_DimSame}]
We first claim that $\calD_Y (\cU) \leq \calD_Y ( \cU \cap Y )$.
Choose a finite open cover $\cV$ of $Y$ in~$Y$
such that $\cV \prec_Y \cU \cap Y$
and $\ord (\cV) = \calD_Y ( \cU \cap Y )$.
Use Lemma~\ref{L_0815_CoverY} to choose
a finite open cover $\cW_0$ of $Y$ in~$X$
such that $\cW_0 \cap Y \prec_Y \cV$ and $\ord (\cW_0) \leq \ord (\cV)$.
For each $W \in \cW_0$ there is $U_W \in \cU$
such that $W \cap Y \S U_W \cap Y$.
Set $\cW = \bigl\{ W \cap U_W \mid W \in \cW_0 \bigr\}$.
Clearly $\cW$ covers $Y$ in~$X$,
$\cW \precsim_Y \cU$, and $\ord (\cW) \leq \ord (\cW_0)$.
So
\[
\calD_Y ( \cU )
 \leq \ord (\cW)
 \leq \ord (\cW_0)
 \leq \ord (\cV)
 = \calD_Y ( \cU \cap Y ),
\]
proving the claim.

For the reverse inequality,
choose a finite open cover $\cW$ of $Y$ in~$X$
such that $\cW \prec_Y \cU$ and $\ord (\cW) = \calD_Y ( \cU )$.
Then
\[
\calD_Y ( \cU \cap Y )
 \leq \ord ( \cW \cap Y)
 \leq \ord ( \cW )
 = \calD_Y (\cU),
\]
as desired.
\end{proof}

In particular, the covering dimension $\dim (Y)$
can be calculated using open covers of $Y$ in~$X$
instead of conventional open covers of~$Y$.

\begin{Def}\label{D_0814_CechFromCov}
Let $X$ be a compact metrizable space,
let $Y \S X$ be closed, and let $\cU$
be a finite open cover of $Y$ in~$X$.
Let $R$ be a commutative unital ring.
For $k = 0, 1, 2, \ldots$,
we denote by $\ch^{k} (Y; \cU; R)$ the set of those elements
of the \v{C}ech cohomology group $\ch^{k} (Y; R)$
which can be represented by cocycles arising from
$\cU \cap Y$ (as in Notation~\ref{N_0820_Int}).
\end{Def}

Suppose $Y_1$ and $Y_2$ are two closed subsets of $X$.
Suppose $\eta_1 \in \ch^k(Y_1 ; R)$ and $\eta_2 \in \ch^m(Y_2;R)$.
Though we cannot define the cup product of these two elements,
as they belong to different groups,
we can restrict them to the intersection and consider the cup product
\[
\eta_1 |_{Y_1 \cap Y_2} \smile \eta_2 |_{Y_1 \cap Y_2}
  \in \ch^{k + m} (Y_1 \cap Y_2; R).
\]

\begin{Lemma}\label{L_0815_CP_cover}
Let $X$ be a compact metrizable space,
let $Y_1, Y_2 \S X$ be closed,
let $R$ be a commutative unital ring,
and for $j = 1, 2$ let $\cU_j$ be a finite open cover of $Y_j$ in~$X$,
let $m_j \in \{ 0, 1, 2, \ldots \}$,
and let $\eta_j \in \ch^{m_j} ( Y_j; \cU_j; R )$.
Then
\[
\eta_1 |_{Y_1 \cap Y_2} \smile \eta_2 |_{Y_1 \cap Y_2}
  \in \ch^{m_1 + m_2} (Y_1 \cap Y_2; \, \cU_1 \vee \cU_2; \, R).
\]
\end{Lemma}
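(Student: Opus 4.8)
Write $Z = Y_1 \cap Y_2$, and set $\cV = \cU_1 \cap Z$ and $\cW = \cU_2 \cap Z$ as in Notation~\ref{N_0820_Int}. The plan is to realize both the restriction to $Z$ and the cup product at the level of simplicial cochains on nerves, so that the representing cocycle for $\eta_1|_{Z} \smile \eta_2|_{Z}$ visibly arises from the cover $(\cU_1 \vee \cU_2) \cap Z$. By Definition~\ref{D_0814_CechFromCov}, the hypothesis $\eta_j \in \ch^{m_j}(Y_j; \cU_j; R)$ says precisely that $\eta_j$ is the image, under the canonical map from nerve cohomology to \v{C}ech cohomology, of a simplicial cocycle $a_j$ on $\cN(\cU_j \cap Y_j)$.

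First I would handle the restriction. Since $Z \subseteq Y_j$, any subfamily of $\cU_j$ whose sets meet in a point of $Z$ meets in a point of $Y_j$; hence the inclusion $Z \subseteq Y_j$ induces a simplicial map $\cN(\cU_j \cap Z) \to \cN(\cU_j \cap Y_j)$, $U \cap Z \mapsto U \cap Y_j$, along which the \v{C}ech restriction $\ch^{m_j}(Y_j; R) \to \ch^{m_j}(Z; R)$ is computed. Restricting $a_j$ along this map gives a cocycle on $\cN(\cU_j \cap Z)$ representing $\eta_j|_{Z}$; thus $\eta_1|_{Z}$ arises from $\cV$ and $\eta_2|_{Z}$ arises from $\cW$.

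The cup product is the crux. Indexing the join $\cV \vee \cW$ by the pairs $(V, W)$ with $V \in \cV$, $W \in \cW$, and $V \cap W \neq \varnothing$, I would fix total orders on $\cU_1$ and $\cU_2$ and use the Alexander--Whitney formula
\[
(a_1|_Z \smile a_2|_Z)\bigl( (V_0, W_0), \ldots, (V_{m_1 + m_2}, W_{m_1 + m_2}) \bigr)
 = a_1|_Z (V_0, \ldots, V_{m_1}) \cdot a_2|_Z (W_{m_1}, \ldots, W_{m_1 + m_2}).
\]
The inclusions $V \cap W \subseteq V$ and $V \cap W \subseteq W$ show that every simplex of $\cN(\cV \vee \cW)$ is carried to a simplex of $\cN(\cV)$, respectively $\cN(\cW)$, by the two refinement projections, so the right-hand side is defined; a routine check that this is a cochain map shows $a_1|_Z \smile a_2|_Z$ is a cocycle on $\cN(\cV \vee \cW)$. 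The substance of this step is that this combinatorial product computes the \v{C}ech cup product, i.e. its image in $\ch^{m_1 + m_2}(Z; R)$ equals $\eta_1|_{Z} \smile \eta_2|_{Z}$. I would establish this by identifying the two refinement projections with the maps along which the \v{C}ech cup product of classes represented on $\cV$ and $\cW$ is evaluated on their common refinement, and then invoking the standard fact that the \v{C}ech cup product is computed by the simplicial cup product on the nerve of a common refinement, together with the independence of the resulting class from the chosen orderings. This, and the bookkeeping needed to pass from the pair-indexed join back to the unindexed join of Definition~\ref{D_0815_Join}, is the main obstacle; everything else is functoriality.

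Finally, unwinding Definition~\ref{D_0815_Join} and Notation~\ref{N_0820_Int} gives the equality of covers $(\cU_1 \cap Z) \vee (\cU_2 \cap Z) = (\cU_1 \vee \cU_2) \cap Z$, since both consist of the nonempty sets of the form $U_1 \cap U_2 \cap Z$ with $U_1 \in \cU_1$ and $U_2 \in \cU_2$. Therefore $a_1|_Z \smile a_2|_Z$ is a cocycle arising from $(\cU_1 \vee \cU_2) \cap Z$ that represents $\eta_1|_{Z} \smile \eta_2|_{Z}$, and Definition~\ref{D_0814_CechFromCov} yields $\eta_1|_{Z} \smile \eta_2|_{Z} \in \ch^{m_1 + m_2}(Z; \cU_1 \vee \cU_2; R)$, as desired.
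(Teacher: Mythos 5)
Your proof is correct and follows the same skeleton as the paper's: reduce to $Z = Y_1 \cap Y_2$, observe that $(\cU_1 \vee \cU_2) \cap Z$ is a common refinement of $\cU_1 \cap Z$ and $\cU_2 \cap Z$, and realize the cup product on the nerve of that common refinement. The difference is in how that last step is executed: where you work at the cochain level with the Alexander--Whitney formula (and must then justify that this simplicial product computes the \v{C}ech cup product, and handle the vertex orderings and the pair-indexed versus set-indexed join --- exactly the ``main obstacle'' you flag), the paper short-circuits all of this with naturality alone. Since both restricted classes arise from the common refinement, both are pullbacks $\eta_j = h^*(\xi_j)$ along a single canonical map $h \colon Z \to \cN(\cV_1 \vee \cV_2)$, so $\eta_1 \smile \eta_2 = h^*(\xi_1 \smile \xi_2)$ arises from that cover directly from the definition. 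The naturality route buys a three-line proof with no cochain-level verification; your route is more self-contained and explicit, at the price of a standard but genuinely fiddly piece of simplicial algebra that the paper never has to touch.
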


\begin{proof}
Set $Y = Y_1 \cap Y_2$,
and set $\cV_j = \cU_j \cap Y$,
a finite open cover of~$Y$.
It suffices to prove that if $\et_j \in \ch^{m_j} ( Y; \cV_j; R )$
for $j = 1, 2$,
then
$\eta_1 \smile \eta_2
  \in \ch^{m_1 + m_2} (Y; \, \cV_1 \vee \cV_2; \, R)$.
Since $\cV_1 \vee \cV_2$ refines both $\cV_1$ and $\cV_2$, we have
$\eta_j \in \ch^{m_j} (Y; \, \cV_1 \vee \cV_2; \, R)$
for $j=1,2$.
This implies that
there exist a map $h \colon Y \to \cN(\cV_1 \vee \cV_2)$
and elements $\xi_j \in \ch^{m_j} (\cN (\cV_1 \vee \cV_2) ; \, R)$
such that $\eta_j = h^*(\xi_j)$ for $j=1,2$.
Thus,
$\xi_1 \smile \xi_2 \in \ch^{m_1+m_2}
  (\cN (\cV_1 \vee \cV_2) ; \, R)$,
and because
$\eta_1 \smile \eta_2 = h^* ( \xi_1 \smile \xi_2 )$,
we have
$\eta_1 \smile \eta_2
 \in \ch^{m_1 + m_2} (Y; \, \cV_1 \vee \cV_2; \, R)$,
as claimed.
\end{proof}

If $F$ is a (finite) set,
we denote by $\crd(F)$ the cardinality of $F$.

\begin{Def}\label{D_0807_AppInv}
If $G$ is a group, $F \subset G$ is a nonempty finite subset,
$G_0 \subset G$, and $\delta>0$,
we say that $F$ is $(G_0,\delta)$-invariant
if $\crd(gF \cap F) > (1-\delta)\crd(F)$ for all $g \in G_0$.
\end{Def}

By convention, $(G_0, \delta)$-invariant subsets are nonempty.

\begin{Def}\label{Def_mcid_Rev}
Let $X$ be a compact metrizable space,
let $G$ be a countable amenable group,
and let $\Act$ be an action of $G$ on $X$.
Let $R$ be a commutative unital ring.
For any \emph{even} integer $k$,
we define $\mcid_k (\Act; R)$
to be the largest $d \in [0, \I)$
such that the following happens.

For every $\ep > 0$ there are a closed subset $Y \S X$,
a finite open cover $\cU$ of $Y$ in~$X$
such that $\calD_Y (\cU) \in \{ k, k + 1 \}$,
and $\et \in \ch^k (Y; \cU; R)$
(Definition~\ref{D_0814_CechFromCov}: \v{C}ech classes using $\cU$)
such that for every finite subset $G_0 \S G$ and every $\dt > 0$
there are a $(G_0, \dt)$-invariant nonempty finite set $F \S G$
and a subset $F_0 \S F$ for which the following happen:
\begin{enumerate}
%
\item\label{Item_D_0808_mcid_Prod}
The cup product of $\Act_g^{*} (\et)$ over all $g \in F_0$,
which makes sense as an element of
$\ch^{k \cdot \crd (F_0)}
 \bigl( \bigcap_{g \in F_0} T_g^{-1} (Y); R \bigr)$,
is nonzero.
\item\label{Item_D_0808_mcid_Large}
$\dfrac{k \cdot \crd (F_0)}{\crd (F)} > d - \ep$.
\end{enumerate}

We then say that $\Act$ has \emph{mean $k$-th
cohomological independence dimension $d$ with coefficients in $R$}.

We define the \emph{mean cohomological independence dimension}
$\mcid (\Act; R)$
to be the supremum of $\mcid_k (\Act; R)$
over all $k \in \{ 0, 2, 4, 6, \ldots \}$.
\end{Def}

\begin{Rmk}\label{R_0820_AtMostk}
The empty cup product is taken to be~$1$.
It is then easy to check that the set of $d$
singled out by the second paragraph of the definition
is an interval of the form $[0, r]$ for some $r \in [0, k]$.
In particular, $\mcid_k (\Act; R) \leq k$.
\end{Rmk}

\begin{Rmk}\label{R_0808_OnlyUseQ}
We defined mean cohomological independence dimension
with coefficients in an arbitrary commutative unital ring~$R$,
but in this paper we only use the case $R = \Q$.
One could think of various generalizations
in which one replaces \v{C}ech cohomology
by more general sheaf cohomology.
However we do not have any use for that here.

We took $k$ to be even for technical reasons
which will become apparent later.
Essentially, this is as we intend to work with complex vector bundles,
and in some sense our notion of dimension
should be thought of as complex dimension.
We could have made the definition without this restriction,
noting that, as the cup product would no longer be commutative,
we would have to make an arbitrary choice for the order
in which the product is taken.
We do not know whether a version which allows odd values of~$k$
is significantly different.
\end{Rmk}

\begin{Rmk}\label{Rmk:mcid for invariant subsets}
It is immediate from the Definition \ref{Def_mcid_Rev}
that if $\Act$ is an action of $G$ on a compact metrizable space $X$
and $Y$ is a closed invariant subset,
then $\mcid_k (\Act|_Y ; R) \leq \mcid_k (\Act ; R)$ for any $k$,
and $\mcid (\Act|_Y ; R) \leq \mcid (\Act ; R)$.
\end{Rmk}

\section{Mean cohomological independence dimension of
  subshifts}\label{Sec_0922_SbShift}

In this section, we give estimates on $\mcid (\Act ; R)$
when $\Act$ is a shift,
or a subshift of the type considered in
\cite{giol-kerr}, \cite{Krieger2009}, and~\cite{Dou2017}.
We start with a general result:
$\mcid_k (\Act; R) \leq \mdim (\Act)$.

\begin{Prop}\label{Prop_mcid_bounded_by_mdim}
Let $X$ be a compact metrizable space,
let $G$ be a countable amenable group,
let $\Act$ be an action of $G$ on $X$,
and let $R$ be a commutative unital ring.
Then for any even natural integer~$k$,
we have $\mcid_k (\Act; R) \leq \mdim (\Act)$.
\end{Prop}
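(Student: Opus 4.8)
The plan is to extract, from the nonvanishing cup products provided by the definition of $\mcid_k$, lower bounds on the orders of the joins $\bigvee_{g \in F} \Act_g^{-1} \cU$ that compute the mean dimension, and then to reconcile the two quantities via the Ornstein--Weiss lemma. Write $d = \mcid_k(\Act; R)$, fix $\ep > 0$, and let $Y \S X$, the cover $\cU$ of $Y$ in $X$, and $\et \in \ch^k(Y; \cU; R)$ be as furnished by Definition~\ref{Def_mcid_Rev} for this $\ep$. Since $Y$ is closed, $\widetilde\cU = \cU \cup \{X \setminus Y\}$ (or $\cU$ itself, when $Y = X$) is a finite open cover of all of $X$, and this is the cover with which I will compute mean dimension.

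The central technical point, and the step I expect to be the main obstacle, is the passage from cohomology to covering dimension: \emph{if $Y' \S X$ is closed, $\cV$ is a finite open cover of $Y'$ in $X$, and $\ch^j(Y'; \cV; R) \neq 0$, then $\calD_{Y'}(\cV) \geq j$.} I would prove this by contradiction. A refinement $\cW \prec_{Y'} \cV$ with $\ord(\cW) < j$ has nerve of dimension $< j$, so $\ch^j(\cN(\cW \cap Y'); R) = 0$; since a refinement induces a simplicial map $\cN(\cW \cap Y') \to \cN(\cV \cap Y')$ commuting up to homotopy with the canonical maps out of $Y'$, any class in $\ch^j(Y'; \cV; R)$ factors through $\ch^j(\cN(\cW\cap Y'); R)$ and hence vanishes, a contradiction. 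Thus every refinement of $\cV$ as a cover of $Y'$ has order at least $j$.

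With this in hand the argument runs as follows. Choose $G_0 \S G$ finite and $\dt > 0$ (to be pinned down below), and apply Definition~\ref{Def_mcid_Rev} to obtain a $(G_0, \dt)$-invariant set $F \S G$ and a subset $F_0 \S F$ with $\smile_{g \in F_0} \Act_g^*(\et) \neq 0$ and $k \, \crd(F_0) / \crd(F) > d - \ep$. Writing $Y' = \bigcap_{g \in F_0} \Act_g^{-1}(Y)$, each class $\Act_g^*(\et)$ lies in $\ch^k(\Act_g^{-1}(Y); \Act_g^{-1}\cU; R)$, so iterating Lemma~\ref{L_0815_CP_cover} places the nonzero cup product in $\ch^{k \crd(F_0)}\bigl(Y'; \bigvee_{g \in F_0} \Act_g^{-1}\cU; R\bigr)$. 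The technical point above then yields $\calD_{Y'}\bigl(\bigvee_{g \in F_0} \Act_g^{-1}\cU\bigr) \geq k \, \crd(F_0)$.

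Finally I transfer this estimate to $\widetilde\cU$ and to all of $X$ through a chain of elementary monotonicity facts. For $g \in F_0$ the set $X \setminus \Act_g^{-1}(Y)$ is disjoint from $Y'$, so on $Y'$ the cover $\bigvee_{g\in F_0}\Act_g^{-1}\widetilde\cU$ agrees with $\bigvee_{g\in F_0}\Act_g^{-1}\cU$, and Lemma~\ref{L_0820_DimSame} equates their values of $\calD_{Y'}$; the join over $F$ refines the join over $F_0$; and $\calD_X \geq \calD_{Y'}$ because the infimum defining $\calD_{Y'}$ ranges over a larger family of covers. Concatenating, $\calD_X\bigl(\bigvee_{g\in F}\Act_g^{-1}\widetilde\cU\bigr) \geq k\,\crd(F_0)$, whence
\[
\frac{1}{\crd(F)}\,\calD_X\Bigl(\bigvee_{g\in F}\Act_g^{-1}\widetilde\cU\Bigr) > d - \ep.
\]
To conclude, I invoke the Ornstein--Weiss lemma for the subadditive, $G$-invariant function $F \mapsto \calD_X(\bigvee_{g\in F}\Act_g^{-1}\widetilde\cU)$, whose normalized values converge to $\mdim(\Act, \widetilde\cU) \leq \mdim(\Act)$. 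Selecting $G_0$ and $\dt$ at the outset so that every $(G_0,\dt)$-invariant set keeps the normalized quantity within $\ep$ of that limit, the displayed inequality forces $\mdim(\Act) > d - 2\ep$. Letting $\ep \to 0$ gives $\mcid_k(\Act; R) = d \le \mdim(\Act)$, as claimed.
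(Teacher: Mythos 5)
Your proposal is correct and follows essentially the same route as the paper's proof: augment $\cU$ by $X \smallsetminus Y$ to get a genuine cover of $X$, use the nonvanishing cup product (via Lemma~\ref{L_0815_CP_cover} and the nerve argument) to force $\calD_{Y'}$ of the join over $F_0$ to be at least $k\,\crd(F_0)$, transfer this to $\calD_X$ of the join over $F$ by the same monotonicity chain, and compare with the definition of mean dimension via almost-invariant sets. The only difference is presentational: you spell out the nerve-factorization argument and the Ornstein--Weiss normalization explicitly, where the paper cites Lemma~\ref{L_0820_DimSame} and the definition of $\mdim$ directly.
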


\begin{proof}
Without loss of generality $\mdim (\Act) < \I$.

Fix an even integer~$k$, and let $\eps > 0$.
Pick a compact subset $Y$ of $X$,
a finite open cover $\cU$ of $Y$ in~$X$ such that
$\calD_Y (\cU) \in \{ k, \, k + 1 \}$,
and an element $\eta \in \ch^k(Y;\cU;R)$ such that
for any finite subset $G_0 \subset G$ and any $\delta>0$
there exists a finite $(G_0,\delta)$-invariant subset $F \S G$
and a subset $F_0 \subset F$ for which:
\begin{enumerate}
\item\label{0816_mcid_mdim_1}
$\displaystyle \underset{g \in F_0}{\smile}
       \Act_{g}^* (\eta) |_{\bigcap_{h \in F_0} \Act_{h}^{-1} (Y)}
\neq 0$.
\item\label{0816_mcid_mdim_2}
$\displaystyle \frac{k \cdot  \crd (F_0)}{\crd (F)}
  > \mcid_k (\Act; R) - \eps$.
\end{enumerate}

Recall that $\cU$ may not be a cover of $X$.
To remedy that,
set $\cU' = \cU \cup \{X \smallsetminus Y\}$.
By the definition of $\mdim (\Act)$,
there are a finite subset $G_0 \subset G$ and $\delta > 0$
such that for every $(G_0, \delta)$-invariant subset $F \S G$
we have
\[
\frac{ \calD_X
  \bigl( \bigvee_{g \in F} \Act_{g^{-1}} (\cU') \bigr)}{\crd (F)}
 < \mdim (\Act) + \eps.
\]
Choose sets $F$ and $F_0$ as above to go with this choice of~$G_0$.
Set
\[
Z = \bigcap_{g \in F} \Act_{g^{-1}} (Y)
\andeqn
Z_0 = \bigcap_{g \in F_0} \Act_{g^{-1}} (Y).
\]
Further define
\[
\cW' = \bigvee_{g \in F} \Act_{g^{-1}} (\cU'),
\qquad
\cW_0 = \bigvee_{g \in F_0} \Act_{g^{-1}} (\cU),
\andeqn
\cW_0' = \bigvee_{g \in F_0} \Act_{g^{-1}} (\cU').
\]
(We don't need the cover that would logically be called~$\cW$.)
Then $\cW'$ and $\cW_0'$ are open covers of~$X$
and $\cW_0$ is an open cover of $Y_0$ in~$X$.

We claim that, following Notation~\ref{N_0820_Int},
we have $\cW_0' \cap Z_0 = \cW_0 \cap Z_0$.
To prove the claim,
first recall that $\cW_0 \cap Z_0$ is the set of nonempty sets in
\[
\left\{ \bigcap_{g \in F_0} \bigl( T_g^{-1} (U_g) \cap Z_0 \bigr) \mid
 {\mbox{$U_g \in \cU$ for $g \in F_0$}} \right\}.
\]
In $\cW_0' \cap Z_0$,
we must also allow $U_g = X \smallsetminus Y$
for some values of $g \in F_0$,
but the additional sets gotten this way are all empty.
The claim is proved.

We have (justifications afterwards)
\[
\calD_X (\cW')
 \geq \calD_{Z_0} (\cW')
 \geq \calD_{Z_0} (\cW_0')
 = \calD_{Z_0} (\cW_0).
\]
The first inequality follows from $Z_0 \S X$,
the second from $\cW' \prec_{Z_0} \cW_0'$,
and the third from the claim above and two applications
of Lemma~\ref{L_0820_DimSame}.

Now,
\[
\underset{g \in F_0}{\smile}
 \Act_{g}^* (\eta) |_{Z_0}
 \in \ch^{k \crd (F_0)} ( Z_0; \cW_0; R ),
\]
and therefore, in particular,
%
$\ch^{k \crd (F_0)} ( Z_0; \cW_0; R ) \neq 0$.
As the elements in this cohomology group
can be realized as pullbacks of elements from nerves
of arbitrary refinements of $\cW_0 \cap Y$,
it follows from Lemma~\ref{L_0820_DimSame} that
$\calD_{Z_0} (\cW_0) \geq k \crd (F_0)$.
Therefore
\begin{equation*}
\begin{split}
\mdim (\Act) + \eps
 > \frac{ \calD_X (\cW')}{\crd (F)}
 \geq \frac{ k \crd (F_0)}{ \crd(F) }
 \geq \mcid_k (\Act; R) - \eps.
\end{split}
\end{equation*}
As $\eps > 0$ is arbitrary,
we have $\mdim (\Act) \geq \mcid_k (\Act; R)$, as required.
\end{proof}

Recall that if $G$ is a discrete group and $Z$ is a set,
then the shift action of $G$ on $Z^G$
is given by $\Act_g (x)_h = x_{g^{-1} h}$
for any $x = (x_g)_{g \in G} \in Z^G$
and all $g, h \in G$.

For the remainder of this section, we add
the assumption that $R$ is a principal ideal
domain.
This is done because the properties of cup products which we need are
often derived in the context of the K{\"u}nneth Formula.
While this requirement
is not strictly needed for the estimates in this section,
we have no present use for such a
possible generalization.

\begin{Prop}\label{Prop_mcid_for_full_shifts}
Let $Z$ be a finite CW complex,
and let $G$ be a countable amenable group.
Suppose $X = Z^G$ and let $\Act$ be the shift.
Let $R$ be a principal ideal domain.
\begin{enumerate}
\item\label{Prop_mcid_for_full_shifts_pI}
For any even $k < \dim(Z)$
we have $\mcid_k (\Act; R) = k$.
\item\label{Prop_mcid_for_full_shifts_mcidEst}
We have
\[
\mcid (\Act; R)
 \geq 2 \left\lfloor \frac{\dim(Z) - 1}{2} \right\rfloor.
\]
\item\label{Prop_mcid_for_shifts_RFin}
If furthermore $G$ has a quotient which is infinite
and residually finite, then $\mcid (\Act; R) = \dim (Z)$.
\end{enumerate}
\end{Prop}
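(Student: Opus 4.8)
The plan is to treat the three parts in order, reducing parts~(\ref{Prop_mcid_for_full_shifts_pI}) and~(\ref{Prop_mcid_for_full_shifts_mcidEst}) to a single‑coordinate cup‑product construction and reserving the real work for part~(\ref{Prop_mcid_for_shifts_RFin}), where residual finiteness is used to manufacture arbitrarily long independent products. For part~(\ref{Prop_mcid_for_full_shifts_pI}), the inequality $\mcid_k(\Act;R)\le k$ is Remark~\ref{R_0820_AtMostk}, so only the reverse inequality needs work. Since $k<\dim(Z)$, a top‑dimensional cell of $Z$ contains in its interior an embedded $k$‑sphere; let $Y_0\S Z$ be its (closed) image and $\eta_0$ a generator of $\ch^k(Y_0;R)\cong R$. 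Writing $\pi_e\colon X=Z^G\to Z$ for evaluation at the identity, I set $Y=\pi_e^{-1}(Y_0)$ and $\eta=\pi_e^*(\eta_0)$; because $\pi_e$ admits a section, $\eta\neq 0$. Pulling back through $\pi_e$ a finite open cover of $Y_0$ in $Z$ that realizes $\eta_0$ and has order $k$ produces a cover $\cU$ of $Y$ in~$X$ with $\eta\in\ch^k(Y;\cU;R)$; since the pulled‑back cover constrains only the coordinate at $e$, one checks via Lemma~\ref{L_0820_DimSame} that $\calD_Y(\cU)=k\in\{k,k+1\}$. Given $G_0$ and $\dt$, I choose a $(G_0,\dt)$‑invariant $F$ and simply take $F_0=F$. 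Using $\Act_g^*(\eta)=\pi_{g^{-1}}^*(\eta_0)$ and the Künneth formula over the PID~$R$, the cup product $\smile_{g\in F}\Act_g^*(\eta)$ is the external product of the generators $\eta_0$ over the distinct coordinates $\{g^{-1}:g\in F\}$, hence nonzero on $\bigcap_{g\in F}\Act_g^{-1}(Y)$; as $\tfrac{k\,\crd(F_0)}{\crd(F)}=k$, this gives $\mcid_k(\Act;R)\ge k$.

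Part~(\ref{Prop_mcid_for_full_shifts_mcidEst}) is then immediate, because $2\lfloor(\dim(Z)-1)/2\rfloor$ is exactly the largest even integer strictly smaller than $\dim(Z)$, so part~(\ref{Prop_mcid_for_full_shifts_pI}) applies to that value of $k$. For part~(\ref{Prop_mcid_for_shifts_RFin}), the upper bound $\mcid(\Act;R)\le\mdim(\Act)\le\dim(Z)$ follows from Proposition~\ref{Prop_mcid_bounded_by_mdim} together with the standard upper bound for the mean dimension of a full shift. The substance is the lower bound $\mcid(\Act;R)\ge\dim(Z)$. An infinite residually finite quotient of~$G$ yields normal subgroups $N\trianglelefteq G$ of finite index $m=[G:N]$ with $m\to\I$. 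Fixing such an~$N$, I choose a right transversal $\Sigma$ with $e\in\Sigma$, so that both $\{s\Sigma:s\in N\}$ and $\{s^{-1}\Sigma:s\in N\}$ partition $G$ into disjoint blocks of size~$m$. Since $Z$ has a cell of dimension $\dim(Z)$ it contains a closed ball of that dimension, so $Z^\Sigma$ contains a closed ball $B^{m\dim(Z)}$ and hence a sphere $S^\ell$ for every $\ell\le m\dim(Z)-1$; I take $\ell=k$ to be the largest even integer with $k\le m\dim(Z)-1$, so $k\ge m\dim(Z)-2$, and let $\eta_0'$ generate $\ch^k(S^k;R)$. Pulling back through the projection $p_\Sigma\colon Z^G\to Z^\Sigma$ gives a closed set $Y=p_\Sigma^{-1}(S^k)$ and a class $\eta=p_\Sigma^*(\eta_0')\in\ch^k(Y;\cU;R)$ for a pulled‑back cover $\cU$ with $\calD_Y(\cU)=k$, exactly as in part~(\ref{Prop_mcid_for_full_shifts_pI}).

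The key point is that for $s\in N$ the shifted class $\Act_s^*(\eta)$ depends only on the coordinates $s^{-1}\Sigma$, and these blocks are pairwise disjoint. Hence, given $G_0$ and $\dt$, if $E\S N$ is a sufficiently invariant Følner set of~$N$ and I set $F_0=E$ and $F=E\Sigma$, then $\smile_{s\in E}\Act_s^*(\eta)$ is the external product of the generators $\eta_0'$ over disjoint coordinate blocks, nonzero by the Künneth formula. One checks that $F=E\Sigma$ is $(G_0,\dt)$‑invariant in~$G$ (Følner sets of a finite‑index subgroup transport to Følner sets of the ambient group) and that $\crd(F)=m\,\crd(E)$, since the translates $E\gamma$ for $\gamma\in\Sigma$ are disjoint (any coincidence would force an element of $N$ to equal an element of $\Sigma\Sigma^{-1}\setminus N$). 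Therefore $\tfrac{k\,\crd(F_0)}{\crd(F)}=\tfrac{k}{m}\ge\dim(Z)-\tfrac{2}{m}$ for every admissible $E$, whence $\mcid_k(\Act;R)\ge\dim(Z)-\tfrac{2}{m}$. Letting $m\to\I$ over the subgroups supplied by residual finiteness gives $\mcid(\Act;R)\ge\dim(Z)$, completing part~(\ref{Prop_mcid_for_shifts_RFin}).

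The main obstacle is concentrated in part~(\ref{Prop_mcid_for_shifts_RFin}): verifying that the external product of the block classes is genuinely nonzero, which is where the Künneth formula, the hypothesis that $R$ is a PID, and the disjointness of the blocks $s^{-1}\Sigma$ are all essential, together with the bookkeeping that $E\Sigma$ is Følner in~$G$ and that the quantifiers assemble correctly (for each fixed~$N$ the density is exactly $k/m$ once $E$ is chosen invariant enough, which bounds $\mcid_k$ from below, after which one passes to the supremum over~$N$). The decisive idea is the use of an even $k$ near $m\dim(Z)$ rather than $\dim(Z)$ itself: this is precisely what lets the construction reach $\dim(Z)$ even when $\dim(Z)$ is odd, and even when $Z$ (for instance a ball) carries no cohomology of its own, since the required cohomology is produced on spheres sitting inside large products.
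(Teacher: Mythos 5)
Your proposal is correct and follows essentially the same route as the paper: a single-coordinate sphere class with $F_0=F$ for part~(\ref{Prop_mcid_for_full_shifts_pI}), and, for part~(\ref{Prop_mcid_for_shifts_RFin}), an $S^k$ with $k$ the largest even integer below $[G:N]\dim(Z)$ embedded in a coset-block $Z^{\Sigma}$, with Følner sets of $N$ expanded by the transversal. The only place you compress what the paper spells out is the verification that $E\Sigma$ is $(G_0,\delta)$-invariant, which requires choosing $E$ invariant with respect to a specific finite subset of $N$ built from $G_0$ and $\Sigma$; this is routine and your parenthetical correctly identifies the standard fact being used.
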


The hypothesis in~(\ref{Prop_mcid_for_shifts_RFin})
is equivalent to saying that
$G$ has subgroups of arbitrarily large finite index.

\begin{proof}[Proof of Proposition~\ref{Prop_mcid_for_full_shifts}]
We prove~(\ref{Prop_mcid_for_full_shifts_pI}).
Let $k$ be an even integer such that $k < \dim(Z)$.
As $Z$ has a cell of dimension greater than $k$,
we can embed the sphere $S^k$ into $Z$.
To simplify notation, fix a copy of $S^k$ in~$Z$,
and simply write $S^k \subset Z$.
Let $q \colon Z^G \to Z$ be the projection onto the coordinate $g=1$.
Set $Y = q^{-1}(S^k)$.
Fix an isomorphism $\ch^k (S^k; R) \to R$.
Let $\et_0 \in \ch^k (S^k; R)$
be the element mapped to the identity of $R$
under this isomorphism.
Choose a finite open cover $\cV_0$ of $S^k$
for which $\ch^k (S^k; \cV_0; R) \to \ch^k (S^k; R)$
is an isomorphism.
Choose a finite open cover $\cV$ of $S^k$ in~$Z$
such that $\cV \cap S^k = \cV_0$.
Then $\ch^k (S^k; \cV; R) = \ch^k (S^k; \cV_0; R)$,
so $\et_0 \in \ch^k (S^k; \cV; R)$.
Set $\cU = q^{-1} (\cV)$,
which is a finite open cover of $Y$ in~$Z^G$,
and set $\eta = (q |_Y)^* (\eta_0) \in \ch^k (Y; \cV; R)$.
We have $\calD_{S^k} (\cV_0) = k$,
so $\calD_{S^k} (\cV) = k$ by Lemma~\ref{L_0820_DimSame}.
It is now easily seen that $\calD_{Y} (\cU) \leq k$,
and the reverse inequality follows from
$\ch^k (Y; \cV; R) \neq 0$,
which is a consequence of the next claim.

We claim that if $F \subset G$ is finite,
then
\[
{\displaystyle{
 \underset{g \in F}{\smile}
   \Act_{g}^*(\eta) |_{\bigcap_{h \in F} \Act_{h}^{-1} (Y)} \neq 0. }}
\]
This will imply that $\mcid_k (\Act; R) \geq k$.
Since $\mcid_k (\Act; R) \leq k$ by Remark~\ref{R_0820_AtMostk},
it follows that $\mcid_k (\Act; R) = k$.

To prove the claim,
choose any point $y_0 \in Z^{G \smallsetminus F}$.
Identify
\[
\bigcap_{g \in F} \Act_{g^{-1}} (Y)
 = (S^k)^F \times Z^{G \smallsetminus F}.
\]
Define maps
\[
t \colon (S^k)^F \to \bigcap_{g \in F} \Act_{g^{-1}} (Y)
\andeqn
p \colon \bigcap_{g \in F} \Act_{g^{-1}} (Y) \to (S^k)^F
\]
by $t (x) = (x, y_0)$ for $x \in (S^k)^F$ and $p (x, y) = x$
for $x \in (S^k)^F$ and $y \in Z^{G \smallsetminus F}$.
Then $p \circ t = \id_{(S^k)^F}$.
Set
\[
\mu
 = \prod_{g \in F} \et_0
 \in \ch^{k \crd (F)} \bigl( (S^k)^F; R \bigr).
\]
(The order in the product does not matter because $k$ is even.)
Then $\mu$ is a generator of this group and in particular is nonzero.
Naturality implies that
\begin{equation*}
{\displaystyle{
 \underset{g \in F}{\smile}
   \Act_{g}^*(\eta) |_{\bigcap_{h \in F} \Act_{h}^{-1} (Y)}
 = p^* (\mu). }}
\end{equation*}
Now $t^* (p^* (\mu)) = \mu \neq 0$,
so $p^* (\mu) \neq 0$.
This is the claim,
and part~(\ref{Prop_mcid_for_full_shifts_pI}) is proved.

Part~(\ref{Prop_mcid_for_full_shifts_mcidEst})
is immediate from part~(\ref{Prop_mcid_for_full_shifts_pI}).

We prove~(\ref{Prop_mcid_for_shifts_RFin}).
The case $\dim (Z) = 0$ is easy,
so suppose $\dim (Z) > 0$.
Since $G$ has a quotient which is infinite but residually finite,
$G$ has arbitrarily large finite quotients.
For a finite set $S \subset G$,
we denote by $q_S \colon Z^G \to Z^S$
the projection onto the coordinates given by $S$.
Note that $Z^S$ is a CW complex of dimension $\crd(S) \cdot \dim(Z)$.

Fix $\delta>0$.
We will prove that there exists an even integer $k$
such that $\mcid_k (\Act; R) > \dim (Z) - \delta$.

Pick a normal group $N \lhd G$ of finite index
such that $[G:N] > 2/\delta$.
Let $S \subset G$ be a set of coset representatives for $G/N$,
with $1 \in S$.
Let $k$ be the largest even integer less than
$\crd(S) \cdot \dim(Z) = \dim(Z^S)$.
As before,
but with $Z^S$ in place of~$S$,
fix an embedding of $S^k$ into $Z^S$ and a finite open cover $\cV$
of $S^k$ in~$Z^S$
such that $\ch^k (S^k; \cV, R) = \ch^k (S^k; R)$,
set $Y = q_S^{-1} (S^k)$,
fix an isomorphism $\ch^k (S^k; R) \to R$,
and let $\et_0 \in \ch^k (S^k; R)$ correspond to $1 \in R$.
Then $\et_0 \in \ch^k (S^k; \cV; R)$.
Set $\cU = q_S^{-1} (\cV)$,
and set $\eta = (q_S |_Y)^* (\eta_0)$.
As before, we have $\calD_Y (\cU) = k$.

Let $G_0 \subset G$ be a finite set and let $\eps > 0$.
Define
\[
M = N \cap \bigl\{ s^{-1} g t \mid
          {\mbox{$g \in G_0$ and $s, t \in S$}} \bigr\},
\]
which is a finite subset of $N$.
Since $N$ is amenable,
there is a finite nonempty $(M, \ep)$-invariant subset $F_0 \S N$.
Define $F = S F_0$.
Then $\crd (F) = \crd (S) \crd (F_0)$.
We claim that $F$ is $(G_0, \eps)$-invariant.
So let $g \in G_0$.
For each $t \in S$ there is a unique $s (g, t) \in S$
such that $g t \in s (g, t) N$.
Then $g t F_0 \S s (g, t) N$,
so $g t F_0 \cap F \S s (g, t) N$,
whence $g t F_0 \cap F = g t F_0 \cap s (g, t) F_0$.
Now $s (g, t)^{-1} g t \in M$, so
\begin{align*}
\crd ( g t F_0 \cap F )
& = \crd ( g t F_0 \cap s (g, t) F_0 )
\\
& = \crd ( s (g, t)^{-1} g t F_0 \cap F_0 )
  > (1 - \ep) \crd (F_0).
\end{align*}
One can check that if $t_1, t_2 \in S$ and $s (g, t_1) = s (g, t_2)$,
then $t_2^{- 1} t_1 \in N$, whence $t_1 = t_2$.
Therefore the sets $g t F_0 \cap F$, for $t \in S$, are disjoint.
Summing over all $t \in S$ now gives
\begin{align*}
\crd ( g t F \cap F )
& = \sum_{t \in S} \crd ( g t F_0 \cap s (g, t) F_0 )
\\
& > (1 - \ep) \crd (S) \crd (F_0)
  = (1 - \ep) \crd (F),
\end{align*}
as claimed.

Since the translates of $S$ under $F_0$ are disjoint,
reasoning similar to that used in the proof of
part~(\ref{Prop_mcid_for_full_shifts_pI})
shows that
\[
\displaystyle
 \underset{g \in F_0}{\smile}
    \Act_{g}^* (\eta) |_{\bigcap_{h \in F_0} \Act_{h}^{-1} (Y)}
 \neq 0.
\]
Also
\begin{equation*}
\begin{split}
\frac{k \cdot \crd (F_0)}{\crd (F)}
& \geq \frac{\bigl[ \crd (S) \dim (Z) - 2 \bigr]
                          \cdot \crd (F_0)}{\crd (F)}
\\
& = \dim (Z) - \frac{2}{\crd (S)}
  > \dim (Z) - \delta.
\end{split}
\end{equation*}
It follows that $\mcid_k (\Act; R) \geq \dim (Z) - \dt$,
as desired.

Since $\dt > 0$ is arbitrary,
we get $\mcid (\Act; R) \geq \dim (Z)$.
On the other hand,
it is easy to deduce from Corollary~4.2 of~\cite{CrtKgr}
that $\mdim (T) \leq \dim (Z)$.
It now follows from Proposition~\ref{Prop_mcid_bounded_by_mdim}
that $\mcid (\Act; R) = \dim (Z)$, as required.
\end{proof}

One of the main sources of examples of minimal homeomorphisms
with nonzero mean dimension is subshifts.
Krieger established some lower bounds in \cite{Krieger2009},
generalizing the case of $\Z$
which was discussed in \cite{LinWeiss2000},
and Dou in \cite{Dou2017} constructed more specific examples
in which one can compute mean dimension precisely.
Here we obtain a related lower bound
for mean cohomological independence dimension for subshifts.

For convenience, we give two definitions.
The first is standard.
The second is intended only for use in this paper,
and identifies a feature which is a useful hypothesis
and which is common among constructions in the literature
of minimal subshifts.

\begin{Def}\label{D_0908_Density}
Let $G$ be a countable amenable group.
A {\emph{F{\o}lner sequence}} in $G$ is a sequence
$(F_n)_{n \in \N}$ of nonempty finite subsets $F_n \S G$
such that for all $g \in G$, we have
\[
\lim_{n \to \I} \frac{\crd (g F_n \triangle F_n)}{\crd (F_n)} = 0.
\]
For a subset $J \S G$, we define its {\emph{density}}
$\dt (J)$ to be the supremum over all F{\o}lner sequences
$(F_n)_{n \in \N}$ in $G$ of
\[
\limsup_{n \to \I} \frac{\crd (J \cap F_n)}{\crd (F_n)}.
\]
\end{Def}

\begin{Def}\label{D_0908_Full}
Let $G$ be a discrete group, let $Z$ be a set,
and let $\Act$ be the shift action of $G$ on $Z^G$.
Let $X \S Z^G$ be $\Act$-invariant.
We say that a subset $J \S G$ is {\emph{$X$-unconstrained}}
(or just {\emph{unconstrained}} if $X$ is understood)
if there is a point $z = (z_g)_{g \in G} \in X$
such that for any $x = (x_g)_{g \in G} \in Z^G$,
if $x_g = z_g$ for any $g \not \in J$ then $x \in X$.
We call such a point $z$ a {\emph{witness}} for the
$X$-unconstrainedness of~$J$.
\end{Def}

If $X = Z^G$ then $G$ itself is $X$-unconstrained.
Subsets of unconstrained sets in~$G$ are unconstrained.
We are interested in shift invariant subsets $X \S Z^G$.
In this case, if $J$ is $X$-unconstrained, with witness~$z$,
and $g \in G$, then $g J$ is also $X$-unconstrained,
with witness~$T_g (z)$.

\begin{Prop}\label{P_0908_ExistFull}
Let $Z$ be a polyhedron, let $\rh \in (0, 1)$,
and let $X \S Z^G$ be the minimal subshift of the shift $\Act$ on $Z^G$
constructed in Section~4 of~\cite{Dou2017}
to satisfy $\mdim (\Act) = \rh \dim (P)$.
Then there is an $X$-unconstrained subset $J \S G$
such that $\dt (J) \geq \rh$.
\end{Prop}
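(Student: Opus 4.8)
The plan is to unpack Dou's construction in Section~4 of~\cite{Dou2017} and locate, inside it, a single subset $J \S G$ which is $X$-unconstrained and whose density is at least~$\rh$. Dou builds the minimal subshift $X$ by an inductive ``coloring'' or ``marker'' procedure: at each stage one fixes the values of a configuration on a progressively sparser controlling set, leaving the coordinates in a complementary region free to take any value in~$Z$ (or, here, in the polyhedron~$P$). The mean dimension $\mdim(\Act) = \rh \dim(P)$ is achieved precisely because the free region occupies an upper density of~$\rh$ along a F{\o}lner sequence. So the first step is to extract from Dou's construction a distinguished configuration $z \in X$ together with the set $J$ of coordinates which remain completely unconstrained for~$z$: these are the coordinates on which $z$ can be altered arbitrarily while staying in~$X$.

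First I would recall the precise form of the witness. Following Definition~\ref{D_0908_Full}, I need a point $z = (z_g)_{g \in G} \in X$ such that any $x \in Z^G$ agreeing with $z$ off~$J$ lies in~$X$. In Dou's scheme the complement $G \smallsetminus J$ is the ``skeleton'' on which the constraints of the minimal subshift live; the freedom on $J$ is exactly what encodes the embedded copies of $P$ that drive up the mean dimension. I would verify unconstrainedness directly from the defining conditions of $X$: altering the $J$-coordinates of $z$ changes none of the constraint equations, because those only reference coordinates in $G \smallsetminus J$, so the altered configuration still satisfies every membership condition for~$X$.

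Next I would compute the density $\dt(J)$. By Definition~\ref{D_0908_Density}, $\dt(J)$ is a supremum over all F{\o}lner sequences of the limsup of $\crd(J \cap F_n)/\crd(F_n)$, so it suffices to exhibit a single F{\o}lner sequence along which this ratio tends to (or exceeds)~$\rh$. Dou's construction comes equipped with a natural F{\o}lner sequence adapted to the nested controlling sets, and the identity $\mdim(\Act) = \rh \dim(P)$ is established by comparing the covering dimension contributed by the free region to $\crd(F_n)$. I would read off from that computation that the free coordinates have upper density at least~$\rh$ along this sequence, giving $\dt(J) \geq \rh$.

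The main obstacle will be the bookkeeping in translating Dou's quantitative mean-dimension estimate into a clean density statement for a \emph{fixed} set~$J$ with a \emph{single} witness~$z$. Dou's argument naturally produces, at each finite stage, a large free region, but these regions may be stage-dependent, and one must take a limit that simultaneously (i) stabilizes to a genuine subset $J \S G$, (ii) admits a coherent limiting witness $z \in X$, and (iii) preserves the density lower bound in the limit. Reconciling the per-stage freedom with a global configuration—so that the unconstrainedness holds for all of $J$ at once rather than stage-by-stage—is the delicate point; I expect to handle it by choosing $z$ as a limit of the stage configurations (using compactness of $Z^G$ and closedness of~$X$) and by defining $J$ as the set of coordinates left free at every sufficiently late stage, then checking that the density is not lost in passing to this intersection.
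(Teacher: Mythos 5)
Your plan is essentially the paper's proof: the paper simply takes the set $J$ constructed in Section~4.2 of~\cite{Dou2017}, where the bound $\dt(J) \geq \rh$ and the existence of the witness~$z$ (the second half of that section) are already established, so the "delicate limiting step" you anticipate is work Dou has already done. Correct, and the same route.
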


\begin{proof}
We use the set $J \subset G$ constructed in
Section~4.2 of~\cite{Dou2017}.
It is proved there that $\dt (J) \geq \rh$.
The second half of Section~4.2 of~\cite{Dou2017}
proves the existence of $z = (z_g)_{g \in G} \in X$
such that for any $x = (x_g)_{g \in G} \in Z^G$,
if $x_g = z_g$ for any $g \not \in J$ then $x \in X$,
that is, $z$ is a witness for the $X$-unconstrainedness of~$J$.
\end{proof}

The fact that the set $J$ in~\cite{Dou2017} is $X$-unconstrained
is crucial in the proof there of the lower bound for $\mdim (\Act)$.
The assumption in Proposition \ref{P_0908_ExistFull}
that $Z$ is a polyhedron follows \cite{Dou2017}.
However, we assume
that the same holds if we assume that $Z$ is any finite CW-complex.

\begin{Prop}\label{Prop_mcid_for_subshifts}
Let $Z$ be a finite CW-complex.
Let $X$ be a closed $G$-invariant subset of $Z^G$,
with the shift action $\Act$.
Let $J \subset G$ be an $X$-unconstrained subset
with witness $z = (z_g)_{g \in G} \in X$.
Let $\rho = \delta(J)$ be the density of $J$.
Let $R$ be a principal ideal domain.
Then for any even integer $k < \dim(Z)$
we have $\mcid_k (\Act; R) \geq k \rho$.
Moreover,
\[
\mcid (\Act |_X; R)
 \geq 2 \rho \left\lfloor \frac{\dim(Z) - 1}{2} \right\rfloor.
\]
\end{Prop}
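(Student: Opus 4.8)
The plan is to mimic the construction used in the proof of Proposition~\ref{Prop_mcid_for_full_shifts}\eqref{Prop_mcid_for_full_shifts_pI}, but to exploit the $X$-unconstrainedness of $J$ in order to produce, inside the closed invariant set $X$, a subspace on which iterates of a chosen $S^k$-class cup together nontrivially. Fix an even $k<\dim(Z)$, embed $S^k$ into~$Z$, let $q\colon Z^G\to Z$ be the projection onto the coordinate $g=1$, set $Y=q^{-1}(S^k)\cap X$, and transport the generator $\et_0\in\ch^k(S^k;R)$ to a class $\eta\in\ch^k(Y;\cU;R)$ via a finite open cover $\cU$ of $Y$ in~$X$ arising from a cover of $S^k$ in~$Z$, exactly as before. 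The reduction $\calD_Y(\cU)\in\{k,k+1\}$ is handled by the same argument using Lemma~\ref{L_0820_DimSame} together with nonvanishing of the relevant cohomology. The essential new input is that $F_0$ must be chosen as a subset of $J$ (after translating by a suitable group element so that the witness lies in the correct place), so that the coordinates indexed by $F_0$ are genuinely free within~$X$.

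First I would fix a finite $G_0\S G$ and $\dt>0$. Using the definition of density $\dt(J)=\rh$ together with the F{\o}lner sequence achieving it, I would select a $(G_0,\dt)$-invariant finite set $F\S G$ for which the intersection $F\cap J$ (after an appropriate translate, using that translates of unconstrained sets are unconstrained with translated witness) has relative size close to $\rh$, i.e.\ $\crd(F\cap J)>(\rh-\dt')\crd(F)$ for a small $\dt'$. Set $F_0=F\cap J$. The key computation is then that the ratio $k\cdot\crd(F_0)/\crd(F)$ exceeds $k\rh-\ep$, which yields condition~\eqref{Item_D_0808_mcid_Large} of Definition~\ref{Def_mcid_Rev}. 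The nonvanishing condition~\eqref{Item_D_0808_mcid_Prod} is where unconstrainedness enters: because $F_0\S J$, the restriction map to $\bigcap_{g\in F_0}\Act_g^{-1}(Y)$ admits a section supported on the $F_0$-coordinates — concretely, one fixes the coordinates outside $F_0$ to agree with the witness $z$, which guarantees that the resulting point lies in~$X$, and lets the $F_0$-coordinates range over copies of $S^k$. As in the full-shift argument, a map $t\colon (S^k)^{F_0}\to \bigcap_{g\in F_0}\Act_g^{-1}(Y)$ and a projection $p$ in the other direction satisfy $p\circ t=\id$, and the external product $\mu=\prod_{g\in F_0}\et_0$ is a nonzero generator (the order being irrelevant since $k$ is even), so $t^*(p^*\mu)=\mu\neq0$ forces the cup product $\underset{g\in F_0}{\smile}\Act_g^*(\eta)|_{\cdots}=p^*(\mu)$ to be nonzero.

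The main obstacle I anticipate is verifying precisely that the section $t$ lands inside~$X$ and that the identification of the intersection as a product over the $F_0$-coordinates times a fixed complement is compatible with the witness condition. In the full-shift case this is automatic because every configuration lies in $Z^G$; here one must invoke Definition~\ref{D_0908_Full} carefully, checking that fixing the non-$F_0$ coordinates to the witness values $z_g$ (for $g$ outside the translated copy of $J$) indeed produces points of~$X$ for arbitrary $S^k$-valued choices on $F_0$. Once this is in place, the density bound and the $(G_0,\dt)$-invariance of $F$ combine to give $\mcid_k(\Act;R)\geq k\rh$, and the final displayed inequality follows by taking $k=2\lfloor(\dim(Z)-1)/2\rfloor$, the largest even integer below $\dim(Z)$, and applying Remark~\ref{Rmk:mcid for invariant subsets} together with the definition of $\mcid$ as a supremum over even~$k$.
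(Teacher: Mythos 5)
Your proposal is correct and follows essentially the same route as the paper's proof: embed $S^k$ in $Z$, pull back the generator through the coordinate projection restricted to $X$, take $F_0 = F\cap J$ for Følner sets $F$ realizing the density, and use the witness $z$ to build the section $t$ with $p\circ t = \id$ so that the cup product is nonzero, concluding with the largest even $k<\dim(Z)$. The only cosmetic difference is that the paper normalizes by assuming $1\in J$ at the outset rather than translating later, which addresses the same point.
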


\begin{proof}
It suffices to prove the first statement.
The proof is similar to that of Proposition
\ref{Prop_mcid_for_full_shifts}(\ref{Prop_mcid_for_full_shifts_pI}).
We may assume $\dim (Z) > 0$.
We may assume without loss of generality that $1 \in J$.

Let $k$ be a nonnegative even integer such that $k < \dim(Z)$.
As $Z$ has a cell of dimension greater than $k$,
we can embed the sphere $S^k$ into $Z$.
As in the proof of Proposition
\ref{Prop_mcid_for_full_shifts}(\ref{Prop_mcid_for_full_shifts_pI}),
fix an embedding of $S^k$ into $Z$ and a finite open cover $\cV$
of $S^k$ in~$Z$
such that $\ch^k (S^k; \cV, R) = \ch^k (S^k; R)$,
fix an isomorphism $\ch^k (S^k; R) \to R$,
and let $\et_0 \in \ch^k (S^k; R)$ correspond to $1 \in R$.
Then $\et_0 \in \ch^k (S^k; \cV; R)$.
Let $q \colon Z^G \to Z$ be the projection onto the coordinate $g=1$.
Then $q(X) = Z$.
Set $Y = q^{-1} (S^k) \cap X$,
set $\cU = q^{-1} (\cV) \cap X$, and set $\eta = (q |_Y)^* (\eta_0)$.
As before, and relying on the next claim, we have $\calD_Y (\cU) = k$.

We claim that for any finite set $F \subset G$, the element
\[
\nu = {\displaystyle{
 \underset{g \in F \cap J}{\smile}
   \Act_{g}^*(\eta) |_{\bigcap_{h \in F \cap J} \Act_{h}^{-1} (Y)} }}
\]
is nonzero.
Letting $F$ run through suitable F{\o}lner sequences,
the definition will give $\mcid_k (\Act |_X; r) \geq k \rho$,
as required.

To prove the claim,
define maps
\[
t \colon
 (S^k)^{F \cap J} \to \bigcap_{g \in F \cap J} \Act_{g^{-1}} (Y)
\andeqn
p \colon
 \bigcap_{g \in F \cap J} \Act_{g^{-1}} (Y) \to (S^k)^{F \cap J}
\]
as follows.
Take $p$ to be the restriction of the projection map
\[
(S^k)^{F \cap J} \times Z^{G \smallsetminus (F \cap J)}
   \to (S^k)^{F \cap J}.
\]
For $x \in (S^k)^{F \cap J}$
define
\[
t (x)_g = \begin{cases}
   x_g & \hspace*{1em} g \in F \cap J
        \\
   z_g & \hspace*{1em} g \in G \smallsetminus (F \cap J).
\end{cases}
\]
The conditions on $z$ imply that $t (x)$ as defined here
really is in~$X$,
and it now follows that
$t (x) \in \bigcap_{g \in F \cap J} \Act_{g^{-1}} (Y)$.
Then $p \circ t = \id_{(S^k)^F}$.
Following the proof of Proposition
\ref{Prop_mcid_for_full_shifts}(\ref{Prop_mcid_for_full_shifts_pI}),
set $\mu = \prod_{g \in F \cap J} \et_0$,
use naturality to get $\nu = p^* (\mu)$,
and use $\mu \neq 0$ and $t^* (p^* (\mu)) = \mu$
to get $p^* (\mu) \neq 0$.
This is the claim.
\end{proof}

\begin{Rmk}\label{R_0826_Obstruct}
One can use Proposition \ref{Prop_mcid_for_subshifts},
Remark \ref{Rmk:mcid for invariant subsets}, and
Proposition
 \ref{Prop_mcid_for_full_shifts}(\ref{Prop_mcid_for_shifts_RFin})
to obtain obstructions for embedding various subshifts in full shifts.
This does not entirely bypass the use of mean dimension,
as we used mean dimension
in the proof of
Proposition
 \ref{Prop_mcid_for_full_shifts}(\ref{Prop_mcid_for_shifts_RFin}).
\end{Rmk}

\section{Lower bounds for the radius of comparison}\label{Sec_0908_LB}

The goal of this section is to show that
for any countable amenable group $G$,
the radius of comparison of $C(X) \rtimes_{\Act} G$
is bounded below by $\mcid_k(\Act;\Q)-1-k/2$.

We first require a lemma in algebraic topology,
motivated by ideas which appear
in the proof of Lemma 2.13 of \cite{ElliottNiu2013}.
We denote by $K(n,\Q)$ the Eilenberg-MacLane spaces, that is,
the classifying spaces for \v{C}ech cohomology
with coefficients in $\Q$.
This means that for a compact metrizable space $Y$,
the \v{C}ech cohomology group $\ch^n(Y;\Q)$ is naturally isomorphic to
the set $[Y, \, K (n, \Q)]$ of homotopy classes of maps
from $Y$ to the Eilenberg-MacLane space.
The space $K(n,\Q)$ is unique up to homotopy equivalence.

\begin{Rmk}\label{R_0826_Chern}
The Chern classes of a vector bundle $E$
over a compact metrizable space $X$
are elements $c_q(E)$ of $\ch^{2q}(X;\Z)$.
(See Theorem V.3.15 and Remark V.3.21 of~\cite{KaroubiBook}.)
The total Chern class is $c (E) = 1 + c_1 (E) + c_2 (E) + \cdots.$
When we work in rational cohomology,
we use the rational version
\[
c_q^{\Q} (E)
 = c_q (E) \otimes 1_{\Q}
 \in \ch^{2 q} (X; \Z) \otimes \Q
 \cong \ch^{2 q} (X; \Q)
\]
and, similarly, $c^{\Q} (E) \in \ch^{*} (X; \Q)$.
\end{Rmk}

\begin{Lemma}\label{Lemma:zero lower classes}
Let $Z$ be a finite CW complex.
Suppose $q$ is a positive integer
and $\eta \in \ch^{2q}(Z;\Q)$ is a nonzero cohomology class.
Then there exists a positive integer $m$
and a vector bundle $E$ over $Z$
such that the $q$-th rational Chern class of $E$ is
$c_q^{\Q} (E) = m \eta$ and such that $c_j^{\Q} (E) = 0$
for all $j \in \{ 1, 2, \ldots, q - 1 \}$.
\end{Lemma}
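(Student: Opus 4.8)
The plan is to use the rational Chern character isomorphism to realize $\eta$ as a Chern character component of an honest bundle, and then to invert the (triangular) relation between Chern character components and Chern classes. Recall that for a finite CW complex $Z$ the Chern character $\mathrm{ch} \colon K^0(Z) \otimes \Q \to \ch^{\mathrm{even}}(Z; \Q)$ is a ring isomorphism, where $\ch^{\mathrm{even}}(Z;\Q) = \bigoplus_{n \geq 0} \ch^{2n}(Z;\Q)$ and $\mathrm{ch}(E) = \sum_{n \geq 0} \mathrm{ch}_n(E)$ with $\mathrm{ch}_n(E) \in \ch^{2n}(Z;\Q)$. Since $\mathrm{ch}$ carries the finitely generated group $K^0(Z)$ onto a full lattice in the finite dimensional $\Q$-vector space $\ch^{\mathrm{even}}(Z;\Q)$, there are a positive integer $N$ and a class $\xi \in K^0(Z)$ with $\mathrm{ch}(\xi) = (-1)^{q-1} N \eta$; as $\eta \in \ch^{2q}(Z;\Q)$ is homogeneous, this means $\mathrm{ch}_q(\xi) = (-1)^{q-1} N \eta$ while $\mathrm{ch}_n(\xi) = 0$ for all $n \neq q$ (in particular $\rank(\xi) = \mathrm{ch}_0(\xi) = 0$).

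First I would replace the virtual class $\xi$ by an honest bundle with the same Chern character in positive degrees. Writing $\xi = [E_0] - [E_1]$ and choosing a complementary bundle $E_1^{\perp}$ with $E_1 \oplus E_1^{\perp}$ trivial (possible since $Z$ is compact), the bundle $E = E_0 \oplus E_1^{\perp}$ satisfies $[E] = \xi + [\theta]$ for a trivial bundle $\theta$. Since trivial bundles have Chern character concentrated in degree $0$, we get $\mathrm{ch}_n(E) = \mathrm{ch}_n(\xi)$ for all $n \geq 1$. Thus $E$ is an actual vector bundle with $\mathrm{ch}_n(E) = 0$ for $1 \leq n \leq q - 1$ and $\mathrm{ch}_q(E) = (-1)^{q-1} N \eta$.

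The heart of the matter is to pass from vanishing of the low Chern character components to vanishing of the low Chern classes, and to pin down the resulting multiple. The splitting principle and Newton's identities give, universally, $\mathrm{ch}_n = \frac{1}{n!} p_n$, where $p_n$ is the $n$-th power sum of the Chern roots and $p_n = (-1)^{n-1} n\, c_n + (\text{a polynomial in } c_1, \ldots, c_{n-1})$, every monomial of the correction term involving some $c_i$ with $i < n$. I would argue by induction on $n$: since $\mathrm{ch}_1 = c_1$, vanishing of $\mathrm{ch}_1(E)$ gives $c_1^{\Q}(E) = 0$; and if $c_1^{\Q}(E) = \cdots = c_{n-1}^{\Q}(E) = 0$ for some $n \leq q - 1$, then the correction term drops out and $\mathrm{ch}_n(E) = \frac{(-1)^{n-1}}{(n-1)!} c_n^{\Q}(E)$, so $\mathrm{ch}_n(E) = 0$ forces $c_n^{\Q}(E) = 0$. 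This yields $c_j^{\Q}(E) = 0$ for $1 \leq j \leq q - 1$, and the same computation in degree $q$ gives $\mathrm{ch}_q(E) = \frac{(-1)^{q-1}}{(q-1)!} c_q^{\Q}(E)$, whence $c_q^{\Q}(E) = (-1)^{q-1}(q-1)!\,\mathrm{ch}_q(E) = (q-1)!\, N\, \eta$. Taking $m = (q-1)! \, N$, which is a positive integer, completes the proof; the sign factor $(-1)^{q-1}$ inserted in the choice of $\mathrm{ch}(\xi)$ is exactly what guarantees $m > 0$ for every parity of $q$.

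The step I expect to be the main obstacle is the careful bookkeeping in the last paragraph: verifying that the correction terms in Newton's identities are genuinely decomposable in the lower Chern classes (so that the induction closes), and tracking the sign and the factorial so that the final multiplier is a positive integer. The rational Chern character isomorphism and the passage from a virtual class to an honest bundle are standard, and I would simply cite them.
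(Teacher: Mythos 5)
Your proof is correct, and at the decisive step it takes a genuinely different route from the paper's. Both arguments rest on the rational Chern character isomorphism $K^0(Z)\otimes\Q \to \ch^{\mathrm{even}}(Z;\Q)$, clear denominators, pass from a virtual class to an honest bundle by adding a complement in a trivial bundle, and use the Newton identity $\Ch_n = \tfrac{(-1)^{n-1}}{(n-1)!}c_n^{\Q} + (\text{decomposables in } c_1^{\Q},\ldots,c_{n-1}^{\Q})$ to extract $c_q^{\Q}$ from $\Ch_q$. The difference is in how the vanishing of $c_1^{\Q},\ldots,c_{q-1}^{\Q}$ is obtained. The paper represents $\eta$ by a map $f\colon Z\to K(2q;\Q)$, factors it through a finite subcomplex $Z'$ of a model of $K(2q;\Q)$ having no cells in dimensions $1,\ldots,2q-1$, and observes that any bundle over $Z'$ automatically has vanishing Chern classes below degree $q$ because the relevant cohomology groups of $Z'$ are zero; the bundle is then pulled back to $Z$. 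You instead work directly on $Z$: homogeneity of $\eta$ forces every Chern character component of $\xi$ other than $\Ch_q$ to vanish, and you invert the triangular system of Newton identities inductively to convert vanishing of $\Ch_1,\ldots,\Ch_{q-1}$ into vanishing of $c_1^{\Q},\ldots,c_{q-1}^{\Q}$. Your route avoids Eilenberg--MacLane spaces and the finite-subcomplex argument entirely, at the cost of the (routine but necessary) induction through the Newton identities, whose correctness hinges on each correction term containing a factor $e_j$ with $j<n$ --- which it does. Your handling of the sign, building $(-1)^{q-1}$ into the choice of $\xi$ at the outset rather than passing to a complementary bundle when $q$ is even as the paper does, is a further small simplification, and your final multiplier $m=(q-1)!\,N$ is indeed a positive integer as required.
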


\begin{proof}
By the construction
of Eilenberg-MacLane spaces in Section~4.2 of \cite{HatcherBook},
we can choose a model for $K (2 q; \Q)$
with no cell of dimension strictly between $0$ and $2 q$.
Represent the cohomology class $\eta$
as a function $f \colon Z \to K(2q;\Q)$.
Since $Z$ is compact,
the image of $Z$ is contained in a finite subcomplex $Z'$
of $K(2q;\Q)$ (by \cite[Proposition A.1]{HatcherBook}).
Then $Z'$ also has no cells of dimension strictly between $0$ and $2 q$.
Let $\iota \colon Z' \to K(2q;\Q)$ be the inclusion map,
and let $f' \colon Z \to Z'$ be the map $f$,
restricting the codomain to be $Z'$, so that $f = \iota \circ f'$.
Denote by $[\iota] \in \ch^{2q}(Z';\Q)$
the class represented by $\iota$.
Then $\eta = (f')^*([\iota])$.

We refer the reader to
the proof of Proposition IV.7.11 of~\cite{KaroubiBook}
for the definition of the Newton polynomials $Q_n$ for
$n=1,2,3,\ldots$,
and to Section V.3 of~\cite{KaroubiBook}
for a discussion of the Chern character.
If $X$ is a compact metrizable space
and $E$ is a vector bundle over $X$,
recall (V.3.19 and V.3.22 in~\cite{KaroubiBook})
that we define the $n$-th component of the Chern character
of a vector bundle $E$ by
\[
\Ch_n (E)
 = \frac{1}{n!} Q_n \bigl( c_1^{\Q} (E), \, c_2^{\Q} (E), \,
          \ldots, \,c_n^{\Q} (E) \bigr)
 \in \ch^{2 n} (X; \Q),
\]
and that $\Ch (E) = \sum_{n = 0}^{\infty} \Ch_n (E)$
(with $\Ch_0 (E)$ taken to be the dimension of $E$).
The Chern character gives rise to an isomorphism
$\Ch \colon K^0 (X) \otimes \Q \to \ch^{\mathrm{even}} (X; \Q)$
(Theorem V.3.25 of~\cite{KaroubiBook}).

Returning to the situation in the first paragraph,
we know that there exist vector bundles
$F_1,F_2,\ldots,F_s$ over $Z'$
and rational numbers $r_1,r_2,\ldots,r_s$ such that
$\Ch_q \bigl( \sum_{l = 1}^s r_l [F_l] \bigr) = [\iota]$.
However, since $Z$ has no cells
of dimension strictly between $0$ and $2q$,
we know that $c_j^{\Q} ([F_l]) = 0$ for $j=1,2,\ldots,q-1$
and $l=1,2,\ldots,s$.
Pick a strictly positive integer $p$ such that $pr_l \in \Z$
for $l=1,2,\ldots,s$.
We thus have
$\Ch_q ( \sum_{l = 1}^s pr_l [F_l]) = p [\iota]$.
Some of those coefficients may be negative.
To overcome that,
for each $l$ such that $r_l < 0$,
replace $F_l$ with its complement
in a sufficiently large trivial bundle,
and replace $r_l$ with $- r_l$.
This change replaces $\sum_{l = 1}^s pr_l [F_l]$
with $[H] + \sum_{l = 1}^s pr_l [F_l]$
for some trivial bundle~$H$.
It has the effect of changing the value of $\Ch_0$,
which we do not care about, but not the value of $\Ch_q$.
We may thus assume without loss of generality
that $r_l>0$ for $l=1,2,\ldots,s$.

The formal sum $\sum_{l=1}^s pr_l[F_l] $
can be replaced now by the direct sum,
so we have a vector bundle $F$ over $Z'$ which satisfies
$\Ch_q (F) = p [\iota]$ and $c_j^{\Q} (F) = 0$
for $j=1,2,\ldots,q-1$.
Thus, as all but one of the terms in the Newton expression vanish,
we have (justification for the second step below)
\begin{align*}
\Ch_q (F)
& = \frac{1}{q!} Q_q \bigl( c_1^{\Q} (F), \, c_2^{\Q} (F), \,
              \ldots, \, c_q^{\Q} (F) \bigr)
\\
& = \frac{1}{q!} \cdot (-1)^{q - 1} q c_q^{\Q} (F)
  = \frac{(-1)^{q - 1}}{(q - 1)!} c_q^{\Q} (F).
\end{align*}
The second step can be deduced from (2.11$'$)
on page 23 of~\cite{Macdonald1995} by rearranging terms;
see pages 19 and~23 there for the notation.
Let $F'$ be the direct sum of $(q-1)!$ copies of $F$ if $q$ is odd,
and the complement in a large trivial bundle
of the direct sum of $(q-1)!$ copies of $F$ if $q$ is even.
Since the total Chern class is multiplicative
and $c (F) = 1 + c_q (F) + c_{q + 1} (F) + \cdots$,
we have $c_q^{\Q} (F') = (q - 1)! p [\iota]$.
Now, set $m = (q-1)!p$, and let $E = (f')^*(F')$.
Then, by naturality,
we have $c_q^{\Q} (E) = m \eta$
and $c_j^{\Q} (E) = 0$ for $j=1,2,\ldots,q-1$.
\end{proof}

We can now prove our main theorem.

\begin{Thm}\label{T_0826_Main}
Let $G$ be a countable amenable group,
let $X$ be a compact metrizable space,
and let $\Act$ be an action of $G$ on $X$.
Let $k$ be an even integer,
and let $m$ be the greatest integer
with $m < \mcid_k (\Act; \Q)$.
Then $\rc (C (X) \rtimes_{\Act} G) \geq m - k/2$.
If $C (X) \rtimes_{\Act} G$ is simple,
then $\rc (C (X) \rtimes_{\Act} G) > m - k/2$.
\end{Thm}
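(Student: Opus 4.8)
The plan is to prove the contrapositive of $r$-comparison for every real $r < m - k/2$: for each small $\eps > 0$ I would exhibit positive elements $a, b \in M_{\infty}(C(X) \rtimes_{\Act} G)$ with $d_\tau(a) + r < d_\tau(b)$ for every tracial state $\tau$, yet $a \not\precsim b$. The elements $a$ and $b$ are dynamical incarnations of a trivial bundle and of a vector bundle carrying a nonvanishing high Chern class; the failure of $a \precsim b$ is ultimately a characteristic-class computation transported from the commutative setting into the crossed product. Traces on $C(X) \rtimes_{\Act} G$ come from $\Act$-invariant Borel probability measures $\mu$ on $X$ (which exist since $G$ is amenable), with $\tau(f) = \int_X f \, d\mu$ on $C(X)$, and the F{\o}lner normalization will enter through these traces.

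First I would unwind the hypothesis $m < \mcid_k(\Act; \Q)$. Picking $d$ with $m < d \leq \mcid_k(\Act;\Q)$ (the maximum is attained by Remark~\ref{R_0820_AtMostk}) and $\eps$ so small that $d - \eps > m$, Definition~\ref{Def_mcid_Rev} supplies a closed set $Y \S X$, a cover $\cU$ of $Y$ in $X$ with $\calD_Y(\cU) \in \{k, k+1\}$, and $\eta \in \ch^k(Y; \cU; \Q)$, together with, for any prescribed $G_0$ and $\dt$, a $(G_0,\dt)$-invariant $F$ and $F_0 \S F$ with $\smile_{g \in F_0} \Act_g^*(\eta) \neq 0$ in $\ch^{k \crd(F_0)}\bigl( \bigcap_{g \in F_0} \Act_g^{-1}(Y); \Q \bigr)$ and $k \crd(F_0)/\crd(F) > m$. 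Crucially, as in the proof of Proposition~\ref{Prop_mcid_bounded_by_mdim}, $\eta$ may be realized through a refinement of $\cU \cap Y$ of order $\calD_Y(\cU) \leq k+1$, so $\eta$ is pulled back along a map $h \colon Y \to \cN$ from a class on a finite CW complex $\cN$ of dimension at most $k+1$. I then apply Lemma~\ref{Lemma:zero lower classes} on $\cN$ to obtain a bundle whose pullback $E = h^*(\,\cdot\,)$ over $Y$ satisfies $c_{k/2}^{\Q}(E) = m' \eta$ for some integer $m' > 0$ and $c_j^{\Q}(E) = 0$ for $0 < j < k/2$; since $\dim \cN \leq k+1$, the classes $c_j^{\Q}$ with $j > k/2$ live in $\ch^{2j}(\cN;\Q) = 0$ and so also vanish. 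Thus $c^{\Q}(E) = 1 + m' \eta$ with no other terms.

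The algebraic-topological heart is then clean. Writing $W = \bigcap_{g \in F_0} \Act_g^{-1}(Y)$ and $\mathcal{E} = \bigoplus_{g \in F_0} \Act_g^*(E)|_W$, each factor contributes $c^{\Q}(\Act_g^*(E)|_W) = 1 + m' \Act_g^*(\eta)|_W$, with parts only in degrees $0$ and $k$. Hence the only way to reach total degree $k \crd(F_0)$ in the product $c^{\Q}(\mathcal{E}) = \prod_{g \in F_0} c^{\Q}(\Act_g^*(E)|_W)$ is to take the degree-$k$ part from every factor, giving $c_{(k/2) \crd(F_0)}^{\Q}(\mathcal{E}) = (m')^{\crd(F_0)} \bigl( \smile_{g \in F_0} \Act_g^*(\eta) \bigr)|_W \neq 0$. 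The standard consequence is that $\mathcal{E}$ admits no trivial direct summand of rank exceeding $\rank(\mathcal{E}) - (k/2)\crd(F_0)$; equivalently, a trivial bundle $\theta^{s}$ embeds in $\mathcal{E}$ only when $s \leq \rank(\mathcal{E}) - (k/2)\crd(F_0)$. This is the commutative obstruction to be violated.

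The hard part will be transporting this obstruction into the crossed product with the bookkeeping that produces the additive correction $-k/2$. I would use the canonical unitaries $u_g \in C(X) \rtimes_{\Act} G$ to spread $E$ across the $F$-orbit, assembling from $\{u_g\}_{g \in F}$ a positive element $b$ whose local model over the F{\o}lner region is the tower bundle built from $\mathcal{E}$, the $(G_0,\dt)$-invariance guaranteeing that the translates are approximately orthogonal, and I would take $a$ trivial of the appropriate rank. Evaluating $d_\tau(a)$ and $d_\tau(b)$ against the traces $\int \cdot \, d\mu$ gives values normalized by $\crd(F)$, and a hypothetical $a \precsim b$ would, after compressing by a projection supported on $W$ and passing to the induced map on a commutative corner (or through a $C(X)$-valued conditional expectation), force a trivial bundle to embed into $\mathcal{E}$ beyond the rank permitted above, a contradiction. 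Letting $G_0$ grow, $\dt \to 0$, and $\eps \to 0$ along F{\o}lner sets with $k\crd(F_0)/\crd(F) > m$ then yields $\rc(C(X) \rtimes_{\Act} G) \geq m - k/2$. The crux, and the step I expect to be most delicate, is exactly this transfer: controlling the crossed-product approximations so that the surviving defect is precisely $(k/2)\crd(F_0)$ per tower and the F{\o}lner normalization converts it into the correction $-k/2$ (reflecting a baseline cost of $k/2$ intrinsic to pushing a degree-$k$ Chern obstruction through the crossed product). Finally, when $C(X) \rtimes_{\Act} G$ is simple one upgrades the estimate to a strict inequality by the usual argument that in a simple $C^*$-algebra the comparison inequality at the threshold value can itself be strictified, giving $\rc(C(X) \rtimes_{\Act} G) > m - k/2$.
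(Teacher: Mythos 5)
Your topological preparation matches the paper's: realize $\eta$ through the nerve of a refinement of order at most $k+1$, apply Lemma~\ref{Lemma:zero lower classes} to get a bundle $E$ of rank $k/2$ with $c^{\Q}(E)=1+\eta$ (after rescaling $\eta$), and note that the total Chern class of $\bigoplus_{g\in F_0}\Act_g^*(E)$ restricted to $\bigcap_{g\in F_0}\Act_g^{-1}(Y)$ has nonzero component in top degree $k\,\crd(F_0)$. But the comparison step is set up backwards, and this is a genuine gap. You take $a$ to be the trivial element and $b$ the bundle element (spread over a F{\o}lner tower using the $u_g$), and aim to contradict the embedding of a trivial bundle into $\mathcal{E}$. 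Two things then fail. First, to defeat $r$-comparison you must verify $d_\tau(a)+r<d_\tau(b)$ for \emph{every} tracial state, which requires a uniform \emph{lower} bound on $d_\tau(b)$; since $E$ lives only over the closed subset $Y$, which may have small or zero measure for some invariant measures, no such bound is available, and the hypothesis of $r$-comparison for $r$ near $m-k/2$ cannot be arranged. Second, your stated obstruction is degenerate: because $\rank(\mathcal{E})=(k/2)\crd(F_0)$, the bound ``no trivial summand of rank exceeding $\rank(\mathcal{E})-(k/2)\crd(F_0)$'' says only ``no trivial summand at all,'' and its quantitative content does not scale with $\crd(F_0)/\crd(F)$ so as to produce the threshold $m$. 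Building $b$ out of the unitaries also breaks the compression step, since such an element has off-diagonal entries that a diagonal cutoff does not remove.

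The paper runs the argument in the opposite direction: $a\in M_L(C(X))$ is a positive contraction extending the projection onto $E$, so $d_\tau(a)\le k/2$ automatically (only an upper bound is needed), and $b$ is a \emph{constant} projection of rank $m$, so $d_\tau(b)=m$ for every trace with no measure-theoretic input. Both $a$ and $b$ lie in $M_L(C(X))$; the unitaries enter only through the hypothetical witness $c$ of $a\precsim b$, which is cut down by an almost-invariant cutoff function to a matrix algebra over $C(X)$ indexed by $G_1F$. Because $b$ is invariant, its compression is again constant, of rank $\crd(G_1F)\rank(b)$, and one concludes that $\widetilde{E}=\bigoplus_{g\in F_0\cap K}\Act_g^*(E)$ embeds into a trivial bundle of that rank; the inverse total Chern class then forces the complementary bundle to have rank at least $(k/2)\crd(F_0\cap K)$, giving $\crd(G_1F)\rank(b)\ge k\,\crd(F_0\cap K)$ and hence $\rank(b)\gtrsim \mcid_k(\Act;\Q)$. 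This is the step your proposal would need to replace. Your treatment of the simple case (closedness of the set of comparison constants) agrees with the paper's appeal to Proposition~6.3 of \cite{Toms-rc}.
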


In particular,
$\rc (C (X) \rtimes_{\Act} G) \geq \mcid_k (\Act; \Q) - 1 - k/2$.

\begin{proof}[Proof of Theorem~\ref{T_0826_Main}]
Fix an even integer~$k$.
Fix $\eps_0>0$.

Find a closed subset $Y \subset X$,
a finite open cover $\cU$ of $Y$ in~$X$ such that
$\calD_Y (\cU) \in \{ k, \, k + 1 \}$,
and a cohomology class $\eta \in \ch^k(Y;\cU;\Q)$ such that
for any finite subset $G_0 \subset G$ and any $\delta>0$
there exists a finite $(G_0,\delta)$-invariant subset $F$
and a subset $F_0 \subset F$ satisfying:
\begin{enumerate}
\item\label{T_0826_Main_Nonzero}
$\displaystyle
 \underset{g \in F_0}{\smile}
   \Act_g^* (\eta) |_{\bigcap_{h \in F_0} \Act_{h}^{-1} (Y)}
 \neq 0$.
\item\label{T_0826_Main_Folner}
\setcounter{TmpEnumi}{\value{enumi}}
$\displaystyle
 \frac{k \cdot \crd (F_0)}{\crd (F)} > \mcid_k(\Act;\Q) - \eps_0$.
\end{enumerate}

Pick a finite open cover $\cV$ of $Y$ in~$X$
such that $\cV \prec_Y \cU$ and $\ord (\cV) = \calD_Y (\cU)$.
Recall that $\cN (\cV)$ is the nerve of~$\cV$.
By the construction of \v{C}ech cohomology,
$\eta$ is also in $\ch^k(Y;\cV;\Q)$,
and can be obtained as a pullback of a cohomology class
$\eta' \in \ch^k(\cN(\cV);\Q)$
by a map $f \colon Y \to \cN(\cV)$.
Use Lemma \ref{Lemma:zero lower classes}
to choose a vector bundle $E'$ over $\cN(\cV)$
such that $c_{k / 2}^{\Q} (E') = M \eta'$ for some nonzero integer~$M$
and $c_j^{\Q} (E') = 0$
for all $j \in \{ 1, 2, \ldots, k / 2 - 1 \}$.
Since $\dim ( \cN ( \cV)) < k + 2$,
we also have $c_j^{\Q} (E') = 0$
for all $j \in \{ k / 2 + 1, \, k / 2 + 2, \ldots \}$.
Therefore $c^{\Q} (E') = 1 + M \et'$.
As we can replace $\eta$ by any nonzero scalar multiple
of it and retain the same properties,
we may use $M\eta$ in place of $\eta$,
so we may lighten notation and assume that $c(E') = 1+\eta'$.
Since $\dim(\cN(\cV)) \leq k+1$,
by subtracting trivial bundles
(see Theorem 9.1.2 of~\cite{Husemoller}),
we can also assume that $\rank(E') = k/2$.
Now set $E = f^*(E')$.
We have $c(E) = 1+\eta$ by naturality, and $\rank(E) = k/2$.

Let $L$ be the dimension of some trivial bundle
which has $E$ as a direct summand, and
let $q \in M_L (C (Y))$ be the projection onto $E$.
Let $a \in M_L(C(X))$ be a positive contraction
such that $a |_Y = q$
and $\rank(a(x)) \leq \rank(E)$ for all $x \in X$.
Let $b \in M_{\infty} (C (X))$ be a constant projection.
Suppose that $a \precsim_{C (X) \rtimes_{\Act} G} b$.
We are going to prove that
this implies $\rank (b) \geq \mcid_k (\Act; \Q) - 2 \ep_0$.

Increasing $L$ if needed,
we may assume that $b \in M_{L} (C (X))$.
There is $c_0 \in M_{\infty} (C (X) \rtimes_{\Act} G)$
such that $\| c_0^* b c_0 - a\| <  1/8$.
Replacing $c_0$ by its cutdown by
$1_{M_{L} (C (X) \rtimes_{\Act} G)}$,
we may assume that
\[
c_0
 \in M_{L} (C (X) \rtimes_{\Act} G)
 \cong (M_{L} \otimes C (X)) \rtimes_{\id_{M_{L}} \otimes \Act} G.
\]
Then there is $c$ in the algebraic crossed product
such that
\begin{equation}\label{Eq_0829_cbc}
\| c^* b c - a \| <  \frac{1}{4}.
\end{equation}
%
With $u_g \in C (X) \rtimes_{\Act} G$ being the standard
unitary corresponding to $g \in G$,
there are a finite subset $G_0 \subset G$
and elements $c_g \in M_L (C (X))$ for $g \in G_0$
such that $c = \sum_{g \in G_0} c_g (1_{M_L} \otimes u_g)$.
Increasing the size of the set~$G_0$,
we may assume that
\begin{equation}\label{Eq_0829_Inverse}
G_0 = \bigl\{ g^{-1} \mid g \in G_0 \bigr\}.
\end{equation}
Choose $\delta >0$ such that
\[
\delta \cdot (\| c \| + 1) \cdot \sum_{g \in G_0} \|c_g\|
 < \frac{1}{16}.
\]

Choose a nonempty finite $(G_0,\delta)$-invariant subset $G_1$ of $G$
with $1 \in G_1$.
Fix $\eps_1>0$ which satisfies
\begin{equation}\label{Eq_0830_CardEst}
\frac{ \mcid_k (\Act; \Q) - \eps_0 - k \eps_1}{1 + \eps_1}
 > \mcid_k (\Act; \Q) - 2 \eps_0.
\end{equation}
Set $\ep_2 = \ep_1 / \crd (G_1)$.
Find a finite nonempty
$\bigl( G_1 \cup (G_1)^{-1}, \, \eps_2 \bigr)$-invariant subset
$F \subset G$
such that there exists a subset $F_0 \subset F$ for which:
\begin{enumerate}
\setcounter{enumi}{\value{TmpEnumi}}
\item\label{Eq_0829_Prod}
$\displaystyle
 \underset{g \in F_0}{\smile}
 \Act_g^* (\eta) |_{\bigcap_{h \in F_0} \Act_{h}^{-1} (Y)} \neq 0$.
\item\label{Eq_0829_Folner}
$\displaystyle \frac{k \cdot  \crd (F_0)}{\crd (F)}
 > \mcid_k(\Act;\Q) - \eps_0$.
\end{enumerate}
For any $h_0 \in G$,
if we replace $F$ and $F_0$ by $F h_0^{- 1}$ and $F_0 h_0^{- 1}$,
then we replace the class in~(\ref{Eq_0829_Prod})
by its image under $T_{h_0}^*$,
which is still nonzero.
Doing this with some $h_0 \in F$,
we may therefore assume that $1 \in F$.

Set
\[
K = \bigcap_{h \in G_1} h F.
\]
Then
\begin{equation}\label{Eq_0830_FppEst}
\begin{split}
\crd (F \smallsetminus K)
& = \crd (F) - \crd (K)
  \leq \sum_{h \in G_1} \crd ( F \smallsetminus [F \cap h F])
\\
& < \crd (G_1) \ep_2 \crd (F)
  = \ep_1 \crd (F).
\end{split}
\end{equation}

Let $\Delta \colon G \to [0,1]$ be the function
\[
\begin{split}
\Delta (g)
& = \frac{1}{\crd (G_1)} (\chi_{G_1} * \chi_F) (g)
\\
& = \frac{1}{\crd (G_1)} \sum_{s \in G} \chi_{G_1} (s) \chi_F (s^{-1} g)
  = \frac{1}{\crd (G_1)} \sum_{h \in G} \chi_{G_1} (g h^{-1}) \chi_F (h)
\end{split}
\]
for $g \in G$.
Note that
\begin{equation}\label{Eq_0901_Dt}
\Delta(g)
  = \frac{1}{\crd (G_1)} \sum_{h \in F} \chi_{G_1} (g h^{-1})
  = \frac{1}{\crd (G_1)} \sum_{s \in G_1} \chi_{F} (s^{-1} g).
\end{equation}
Then for $t \in G_0$ and $g \in G$ we have,
using~(\ref{Eq_0829_Inverse})
and $(G_0, \dt)$-invariance of~$G_1$ at the last step,
\begin{equation}
\label{eqn_Delta_almost_invariant}
\begin{split}
|\Delta (t^{-1} g) - \Delta(g)|
& = \left | \frac{1}{\crd (G_1)} \sum_{h \in F}
     \bigl( \chi_{g^{-1} t G_1} (h^{-1})
              - \chi_{g^{-1} G_1} (h^{-1}) \bigr) \right |
\\
& \leq  \frac{\crd \bigl( (t G_1 \smallsetminus G_1)
      \cup (G_1 \smallsetminus t G_1) \bigr)}{\crd (G_1)}
  < 2 \delta.
\end{split}
\end{equation}
We further claim that $\Delta(g) = 1$ for all $g \in K$.
To see this, notice that,
by~(\ref{Eq_0901_Dt}),
we have $\Delta(g) = 1$
if and only if $s^{-1} g \in F$ for all $s \in G_1$,
that is, $g \in K$.

Let $\af$ be the corresponding action of $G$ on $C (X)$,
that is, $\alpha_g (f) (x) = f (\Act_g^{-1} (x))$
for $g \in G$, $f \in C (X)$, and $x \in X$.
Let $l^2 (G) \otimes C (X)$ be the usual Hilbert $C (X)$-module,
and write its elements as functions $\xi \colon G \to C (X)$
such that $\sum_{g \in G} \xi (g)^* \xi (g)$ converges in $C (X)$.
For any Hilbert module~$H$,
let $B (H)$ denote the $C^*$-algebra of adjointable operators on~$H$.
We view $C(X) \rtimes_{\Act}G $
as embedded in $B(l^2(G) \otimes C(X))$
in the standard way, that is,
if $f \in C(X)$, $\xi \in l^2(G) \otimes C(X)$, and $g, h \in G$, then
\[
(f \cdot \xi) (g) = \alpha_{g^{-1}} (f) \cdot \xi (g)
\andeqn
(u_h \cdot \xi) (g) = \xi (h^{-1} g).
\]
We define a multiplication operator $d_0 \in B(l^2(G) \otimes C(X))$ by
\[
(d_0 \xi) (g) = \Delta (g) \xi (g)
\]
for $g \in G$.
By~(\ref{eqn_Delta_almost_invariant}),
for any $t \in G_0$,
$\| u_t d_0 u_t^* - d_0 \| < 2 \delta$,
whence $\| u_t d_0 - d_0 u_t \| < 2 \delta$.
Set $\td = d_0 \otimes 1_{M_L}$.
It follows that
\begin{equation}\label{Eq_0829_CommEst}
\| c \td - \td c \|
 < 2 \delta \sum_{g \in G_0} \|c_g\|
 < \frac{1}{8 (\| c \| + 1)}.
\end{equation}
%

Notice that $\supp(\Delta) = G_1F$.
Thus, we can view $\td  M_{L}(C(X) \rtimes_{\Act} G) \td$
as included in $B \bigl( l^2(G_1F) \otimes M_L \otimes C(X) \bigr)$,
with $l^2 (G_1 F) \otimes M_L \otimes C (X)$
regarded as a Hilbert $M_L \otimes C (X)$-module.
Since $G_1 F$ is a finite set,
this is a matrix algebra over $C (X)$.

Since, in particular,
$F$ is $\bigl( (G_1)^{-1}, \, \eps_2 \bigr)$-invariant,
we have
\begin{equation}\label{Eq_0830_FpF}
\crd (G_1 F)
 < \bigl( 1 + \eps_2 \crd (G_1) \bigr) \crd (F)
 = ( 1 + \eps_1 ) \crd (F).
\end{equation}
Set
\[
c' = \td^{1/2} c \td^{1/2}
   \in B \bigl( l^2 (G_1 F) \otimes M_L \otimes C (X) \bigr).
\]
Then, at the third step
using (\ref{Eq_0829_CommEst}) on the first term
and (\ref{Eq_0829_cbc}) on the second term,
\begin{equation*}
\begin{split}
& \bigl\| (c')^* \td^{1/2} b \td^{1/2} c'
               - \td^{3/2} a \td^{3/2} \bigr\|
\\
& \hspace*{3em} {\mbox{}}
 \leq
 \bigl\| \td^{1/2} c^* \td b \td c \td^{1/2}
                - \td^{3/2} c^* b \td c \td^{1/2} \bigr\|
   + \bigl\| \td^{3/2} c^* b \td c \td^{1/2}
                - \td^{3/2} c^* b c \td^{3/2} \bigr\|
   \\
& \hspace*{6em} {\mbox{}}
     + \bigl\| \td^{3/2} c^* b c \td^{3/2}
                - \td^{3/2} a \td^{3/2} \bigr\|
\\
& \hspace*{3em} {\mbox{}}
 \leq 2 \| c \| \| d c - c d \| + \| c^* b c - a \|
  < 2 \left( \frac{1}{8} \right) + \frac{1}{4}
  = \frac{1}{2}.
\end{split}
\end{equation*}
Now, under our identification of $a$
as an element in the crossed product,
$\td^{3/2}a\td^{3/2}$ is a diagonal operator
on the Hilbert $M_L \otimes C (X)$-module
$l^2 (G_1 F) \otimes M_L \otimes C (X)$,
and for any $g \in K$, since $\Delta(g) = 1$,
the $g$-th diagonal entry
is simply $(\alpha_{g^{-1}} \otimes \id_{M_L}) (a)$.
Likewise, as $b$ is invariant under the group action,
$\td^{3/2}b \td^{3/2}$ is a diagonal matrix
whose diagonal entries
are scalar multiples of the constant projection $b$.
We restrict all these diagonal entries
(which are matrix valued functions on $X$)
to $Y' = \bigcap_{g \in F_0 \cap K} \Act_{g}^{-1} (Y)$.

Let $p \in B \bigl( l^2 (G_1 F) \otimes M_L \otimes C (X) \bigr)$
be the diagonal projection
whose diagonal $g$-th entry is $1$ if $g \in K$ and zero otherwise.
We obtain
\[
\bigl\| p \td^{3/2} a \td^{3/2} p
        - p (c')^* \td^{1/2} b \td^{1/2} c' p \bigr\|
 < \frac{1}{2}.
\]
This remains true after restricting to $Y'$.
Note that $p\td^{3/2}a\td^{3/2}p = pap$.
Thus, the projection $pap|_{Y'}$
is Murray-von-Neumann subequivalent
to the cutdown of $b$ to
$B \bigl( l^2 (G_1F) \otimes M_L \otimes C (X) \bigr)$,
restricted to $Y'$,
which is a constant projection of rank $\crd(G_1F)\cdot \rank(b)$.
Notice that
$pap|_{Y'}$ is the projection onto
\[
{\widetilde{E}}
 = \bigoplus_{g \in F_0 \cap K}
    \Act_g^*(E) |_{\bigcap_{h \in F_0 \cap K} \Act_{h}^{-1} (Y)}.
\]
Now,
\[
c \bigl( {\widetilde{E}} \bigr)
 = {\displaystyle{ \underset{g \in F_0 \cap K}{\smile}
       (1 + \Act_g^* (\eta))
           |_{\bigcap_{h \in F_0 \cap K} \Act_{h}^{-1} (Y)} }},
\]
so if ${\widetilde{E}} \oplus E'$ is a trivial bundle, then
\[
c (E')
 = c \bigl( {\widetilde{E}} \bigr)^{-1}
 = {\displaystyle{
    \underset{g \in F_0 \cap K}{\smile}
       (1 - \Act_g^* (\eta))
           |_{\bigcap_{h \in F_0 \cap K} \Act_{h}^{-1} (Y)} }}.
\]
In particular, $c_{k \cdot \crd(F_0 \cap K)/2}(E') \neq 0$,
so $\rank(E') \geq k \cdot \crd(F_0 \cap K)/2$.
Therefore ${\widetilde{E}}$ cannot embed
into a trivial bundle of rank less than $k \cdot \crd (F_0 \cap K)$.
Thus
\begin{equation}\label{Eq_0901_Rank_b}
\crd (G_1F) \cdot \rank (b)
 \geq k \cdot \crd (F_0 \cap K).
\end{equation}

Using (\ref{Eq_0829_Folner}) and~(\ref{Eq_0830_FppEst})
at the second step,
\[
\begin{split}
\frac{k \cdot \crd (F_0 \cap K)}{\crd (F)}
& \geq \frac{k \cdot \crd (F_0)}{\crd (F)}
          - \frac{k \cdot \crd (F \smallsetminus K)}{\crd (F)}
\\
& > \mcid_k (\Act; \Q) - \eps_0 - k \ep_1,
\end{split}
\]
so,
using (\ref{Eq_0901_Rank_b}) at the first step,
(\ref{Eq_0830_FpF}) at the second step,
and (\ref{Eq_0830_CardEst}) at the third step,
\[
\begin{split}
\rank (b)
& \geq \frac{k \cdot \crd (F_0 \cap K)}{\crd (F)}
       \left( \frac{\crd (F)}{\crd (G_1 F)} \right)
\\
& > \frac{\mcid_k (\Act; \Q) - \eps_0 - k \ep_1}{1 + \eps_1}
  > \mcid_k (\Act; \Q) - 2 \eps_0.
\end{split}
\]
This is what we set out to prove.

Now recall that $m $ is the greatest integer
with $m < \mcid_k (\Act; \Q)$.
Choose $\ep_0 > 0$ such that $\mcid_k (\Act; \Q) - m - 2 \ep_0 > 0$.
Make the choices above with this value of $\ep_0$,
but take $b$ to be a constant projection with $\rank (b) = m$.
Then $a \not\precsim_{C (X) \rtimes_{\Act} G} b$.
For any invariant tracial state $\tau$ on $C(X)$,
and hence for any tracial state $\tau$ on $C (X) \rtimes_{\Act} G$,
we have $d_{\tau}(b) = \rank(b)$
and $d_{\tau}(a) \leq \rank(a) = k/2$.
So $C (X) \rtimes_{\Act} G$ does not
have $(m - k/2)$-comparison.
Thus, $\rc (C (X) \rtimes_{\Act} G) \geq m - k/2$.

If $C (X) \rtimes_{\Act} G$ is simple,
then, since this algebra is also stably finite,
Proposition~6.3 of~\cite{Toms-rc}
implies that the set of real numbers $r$
such that $C (X) \rtimes_{\Act} G$
has $r$-comparison is closed.
So in fact $\rc (C (X) \rtimes_{\Act} G) > m - k/2$.
\end{proof}

Applying this result to the subshift of \cite{Dou2017}
and rounding several estimates,
we get the following result.
The ``loss factor'' $1 - \frac{1 - \rh}{\rh}$,
which is the main part of the difference between our estimate and the
conjectured value of $\rc (C (X) \rtimes_{\Act} G)$,
is $1$ if $\rh = 1$ and close to~$1$ if $\rh$ is close to~$1$,
but makes the estimate useless if $\rh \leq \frac{1}{2}$.
If we assume $\ch^k (Z; \Q) \neq 0$,
we can remove this factor.
See Corollary~\ref{C_0907_Shift_Sk}.

\begin{Cor}\label{C_0926_Shift_Gen}
Let $G$ be a countable amenable group,
let $Z$ be a polyhedron,
and let $\rh \in (0, 1)$.
Let $(X, \Act)$ be the minimal subshift of the shift on $Z^G$
constructed in Section~4 of~\cite{Dou2017}
to satisfy $\mdim (\Act) = \dim (Z) \rh$.
Then
\[
\rc (C (X) \rtimes_{\Act} G)
 > \frac{1}{2} \mdim (T) \left( 1 - \frac{1 - \rh}{\rh} \right) - 2.
\]
\end{Cor}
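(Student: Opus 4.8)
The plan is to combine the lower bound for $\mcid_k$ of Dou's subshift coming from Proposition~\ref{Prop_mcid_for_subshifts} with the main comparison estimate Theorem~\ref{T_0826_Main}, and then to choose the integer $k$ so as to make the resulting bound as large as possible. Before anything else I would rewrite the target. Writing $d = \dim (Z)$, so that $\mdim (\Act) = d \rh$, a short computation gives
\[
\frac{1}{2} \mdim (\Act) \left( 1 - \frac{1 - \rh}{\rh} \right) - 2
 = \frac{1}{2} d (2 \rh - 1) - 2
 = d \bigl( \rh - \tfrac{1}{2} \bigr) - 2,
\]
so the corollary reduces to proving $\rc (C (X) \rtimes_{\Act} G) > d \bigl( \rh - \tfrac{1}{2} \bigr) - 2$.

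The geometric input comes next. Proposition~\ref{P_0908_ExistFull} supplies an $X$-unconstrained subset $J \S G$ with $\dt (J) \geq \rh$, and then Proposition~\ref{Prop_mcid_for_subshifts}, applied with $R = \Q$, yields $\mcid_k (\Act; \Q) \geq k \, \dt (J) \geq k \rh$ for every even integer $k < d$. Feeding this into the remark following Theorem~\ref{T_0826_Main} gives, for each such~$k$,
\[
\rc (C (X) \rtimes_{\Act} G)
 \geq \mcid_k (\Act; \Q) - 1 - \frac{k}{2}
 \geq k \bigl( \rh - \tfrac{1}{2} \bigr) - 1.
\]

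The heart of the argument is the optimization over $k$, and this is where I expect the only real subtlety to lie. I would take $k$ to be the largest even integer with $k < d$; the point is that then $d - k \in \{ 1, 2 \}$, so the ``loss'' incurred by using $k$ rather than $d$ is controlled. Indeed, since $\rh < 1$ we have $(d - k) \bigl( \rh - \tfrac{1}{2} \bigr) < 1$ (if $\rh \geq \tfrac{1}{2}$ this is $\leq 2 \rh - 1 < 1$, and if $\rh < \tfrac{1}{2}$ it is negative), so
\[
k \bigl( \rh - \tfrac{1}{2} \bigr) - 1
 = d \bigl( \rh - \tfrac{1}{2} \bigr) - 2
    + \Bigl[ 1 - (d - k) \bigl( \rh - \tfrac{1}{2} \bigr) \Bigr]
 > d \bigl( \rh - \tfrac{1}{2} \bigr) - 2,
\]
and combining with the previous display finishes the proof. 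Two bookkeeping points remain. First, the case $d \leq 2$ (in which $k = 0$ would be forced) must be disposed of separately, but there $d \bigl( \rh - \tfrac{1}{2} \bigr) - 2 < 0 \leq \rc (C (X) \rtimes_{\Act} G)$ (using $d \leq 2$ and $\rh < 1$), so the statement is trivial. Second, I would observe that the strict inequality in the corollary already comes from the strict slack $(d - k) \bigl( \rh - \tfrac{1}{2} \bigr) < 1$, so that minimality of the subshift, and hence simplicity of the crossed product, is not actually needed here. The one thing to get right is thus the choice of $k$ together with the verification that the additive constant works out to exactly $-2$.
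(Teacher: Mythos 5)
Your proposal is correct and follows essentially the same route as the paper: the same choice of $k$ (the largest even integer below $\dim(Z)$), the same inputs (Proposition~\ref{P_0908_ExistFull}, Proposition~\ref{Prop_mcid_for_subshifts}, and Theorem~\ref{T_0826_Main}), with only the final bookkeeping organized differently (a single computation via $(d-k)(\rh - \tfrac12) < 1$ instead of the paper's case split on $k = \dim(Z)-2$ versus $k = \dim(Z)-1$). Your observation that the strict inequality already follows from this slack, without appealing to simplicity of the crossed product, is a slightly cleaner justification than the paper's direct citation of the strict form of Theorem~\ref{T_0826_Main}.
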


\begin{proof}
We may certainly assume $\dim (Z) > 0$.
Let $k$ be largest even integer with $k < \dim (Z)$,
so that $\dim (Z) - 2 \leq k \leq \dim (Z) - 1$.
Then $\mcid_k (T) \geq k \rh$
by Proposition~\ref{Prop_mcid_for_subshifts}.
Therefore $\rc (C (X) \rtimes_{\Act} G) > k \rh - 1 - k/2$
by Theorem~\ref{T_0826_Main}.
If $k = \dim (Z) - 2$,
using $\rh \leq 1$ at the last step, we get
\[
\begin{split}
\rc (C (X) \rtimes_{\Act} G)
& > [\dim (Z) - 2] \rh - 1 - \frac{\dim (Z) - 2}{2}
\\
& = \frac{1}{2} \dim (Z) \rh \left( 2 - \frac{1}{\rh} \right) - 2 \rh
  \geq \frac{1}{2} \dim (Z) \rh \left( 1 - \frac{1 - \rh}{\rh} \right)
         - 2.
\end{split}
\]
If $k = \dim (Z) - 1$, then instead
\[
\begin{split}
\rc (C (X) \rtimes_{\Act} G)
& > \frac{1}{2} \dim (Z) \rh \left( 1 - \frac{1 - \rh}{\rh} \right)
         - \rh - \frac{1}{2}
\\
& > \frac{1}{2} \dim (Z) \rh \left( 1 - \frac{1 - \rh}{\rh} \right)
         - 2.
\end{split}
\]
Now substitute $\dim (Z) \rh = \mdim (\Act)$.
\end{proof}

\begin{Exl}\label{E_0830_Large_rc}
For every infinite countable amenable group~$G$
and every $N > 0$,
there is a cube~$Z$
and a minimal subsystem $(X, T)$ of the shift action of $G$ on $Z^G$
such that $\rc (C (X) \rtimes_{\Act} G) > N$.

In Corollary~\ref{C_0926_Shift_Gen}
take $\rh = \frac{3}{4}$,
and choose an integer~$d$ with $d > 4 (N + 2)$.
The subshift $T$ in Section~4 of~\cite{Dou2017}
satisfies $\mdim (T) = d \rh$,
so
\[
\rc (C (X) \rtimes_{\Act} G)
 > \frac{1}{2} d \rh \left( 1 - \frac{1 - \rh}{\rh} \right) - 2
 = \frac{d}{4} - 2
 > N.
\]
\end{Exl}

\section{Symmetric mean cohomological independence
 dimension}\label{Sec_0908_Symm}

In this section, we define a variant of mean cohomological independence
dimension, which we use to obtain sharper lower bounds on the radius of
comparison for certain subshifts.

We recall the elementary symmetric polynomials:
for $n, r \in \{ 0, 1, 2, \ldots \}$
with $r \leq n$,
\[
\sm_r (x_1, x_2, \ldots, x_n)
 = \sum_{1 \leq j_1 < j_2 < \cdots < j_r \leq n}
   x_{j_1} x_{j_2} \cdots x_{j_r}
\]
is the elementary symmetric polynomial of degree $r$
in the $n$ variables $x_1, x_2, \ldots, x_n$.

\begin{Def}\label{N_0904_ElSymmP}
Let $k$ be an even integer, let $Y$ be a compact metrizable space,
let $R$ be a commutative unital ring,
let $F$ be a finite set,
and let $r \in \{ 0, 1, 2, \ldots, \crd (F) \}$.
If $(\et_g)_{g \in F}$ is a family of elements in $\ch^{k} (Y; R)$
indexed by~$F$, we define $\sm_r \bigl( (\et_g)_{g \in F} \bigr)$
as follows.
Set $n = \crd (F)$,
and enumerate $F$ as $\{ g_1, g_2, \ldots, g_n \}$.
Then define
\[
\sm_r \bigl( (\et_g)_{g \in F} \bigr)
 = \sum_{1 \leq j_1 < j_2 < \cdots < j_r \leq n}
   \et_{g_{j_1}} \smile \et_{g_{j_2}}
         \smile \cdots \smile \et_{g_{j_r}}.
\]
We call it the {\emph{$r$-th elementary symmetric polynomial of
$(\et_g)_{g \in F}$}}.
\end{Def}

Since $k$ is even and $R$ is commutative,
$\sm_r \bigl( (\et_g)_{g \in F} \bigr)$ does not
depend on the enumeration of~$F$.

\begin{Def}\label{D_0904_Def_St_mcid}
Let $X$ be a compact metrizable space,
let $G$ be a countable amenable group,
and let $\Act$ be an action of $G$ on $X$.
Let $R$ be a commutative unital ring.
For any \emph{even} integer~$k$,
we take $\mcidgs_k (\Act; R)$
to be the largest $d \in [0, \I)$
such that the following happens.

There are a finite open cover $\cU$ of~$X$
such that $\calD_X (\cU) \in \{ k, k + 1 \}$
and $\et \in \ch^k (X; \cU; R)$
(Definition~\ref{D_0814_CechFromCov}: \v{C}ech classes using $\cU$)
such that for every finite subset $G_0 \S G$ and every $\ep > 0$
there are a $(G_0, \ep)$-invariant nonempty finite set $F \S G$
and $r \in \{ 0, 1, 2, \ldots, \crd (F) \}$
for which:
\begin{enumerate}
%
\item\label{Item_D_0907_mcids_Prod}
Following Definition~\ref{N_0904_ElSymmP},
we have $\sm_r \bigl( (T_g^{*} (\et) )_{g \in F} \bigr) \neq 0$.
\item\label{Item_D_0907_mcids_Large}
$\dfrac{k r}{\crd (F)} > d - \ep$.
\end{enumerate}

We then say that $\Act$ has \emph{symmetric mean $k$-th
cohomological independence dimension $d$ with coefficients in $R$}.

We define the
\emph{symmetric mean cohomological independence dimension}
$\mcidgs (\Act; R)$
to be the supremum of $\mcidgs_k (\Act; R)$ over all even $k \in \N$.
\end{Def}

As we will see,
this definition is sometimes useful for subshifts of the shift on $Z^G$
when $\ch^k (Z; \Q) \neq 0$.
It doesn't give anything useful for the shift on $([0, 1]^d)^G$.
One would like to ask that
for every $\ep > 0$ there be a closed subset $Y \S X$,
a finite open cover $\cU$ of $Y$ in~$X$
such that $\calD_Y (\cU) \in \{ k, k + 1 \}$,
and $\et \in \ch^k (Y; \cU; R)$
such that for every finite subset $G_0 \S G$ and every $\dt > 0$
there are a $(G_0, \dt)$-invariant nonempty finite set $F \S G$
and $r \in \{ 0, 1, 2, \ldots, \crd (F) \}$
for which (\ref{Item_D_0907_mcids_Prod})
and~(\ref{Item_D_0907_mcids_Large}) hold.
This notion can be used on the full shift,
but does not seem useful for minimal systems.
The set $\bigcap_{g \in G} T_g^{-1} (Y)$ is closed and $G$-invariant.
If it is~$X$, then $Y = X$.
If it is~$\varnothing$,
then there is a finite subset $F \S G$
such that $\bigcap_{g \in F} T_g^{-1} (Y) = \varnothing$,
which spoils~(\ref{Item_D_0907_mcids_Large}).

We could give a generalization of this definition,
possibly useful for nonminimal systems,
by considering the supremum
of the values of $\mcidgs_k (\Act_{Y}; R)$ as $Y$ ranges over all closed
invariant subsets of $X$;
we chose to avoid it here in order to lighten notation.

\begin{Lemma}\label{R_0904_Smalller}
In the situation of Definition~\ref{D_0904_Def_St_mcid},
we have $\mcidgs_k (\Act; R) \leq \mcid_k (\Act; R)$.
\end{Lemma}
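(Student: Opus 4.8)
The plan is to show that every value $d$ admissible in Definition~\ref{D_0904_Def_St_mcid} for $\mcidgs_k(\Act; R)$ is also admissible in Definition~\ref{Def_mcid_Rev} for $\mcid_k(\Act; R)$; since $\mcidgs_k(\Act;R)$ is the largest such $d$ and each such $d$ satisfies $\mcid_k(\Act;R) \ge d$, the inequality follows. The two definitions differ in exactly two respects: the symmetric version fixes a cover $\cU$ of all of~$X$ (and a class $\et \in \ch^k(X; \cU; R)$) once and for all, whereas the asymmetric version allows a cover of a closed subset $Y \S X$ depending on~$\ep$; and the symmetric version tests nonvanishing of the elementary symmetric polynomial $\sm_r\bigl((\Act_g^*(\et))_{g \in F}\bigr)$, whereas the asymmetric version tests nonvanishing of a single cup product over a subset $F_0 \S F$. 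I would reconcile both differences by taking $Y = X$ and by extracting a single surviving term from the symmetric polynomial.

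First I would record the central observation. By Definition~\ref{N_0904_ElSymmP},
\[
\sm_r \bigl( (\Act_g^*(\et))_{g \in F} \bigr)
 = \sum_{\substack{F_0 \S F \\ \crd(F_0) = r}}
     \underset{g \in F_0}{\smile} \Act_g^*(\et)
\]
is a sum, in $\ch^{kr}(X; R)$, of the cup products over the $r$-element subsets of~$F$ (the order within each product being irrelevant since $k$ is even). Consequently, if this symmetric polynomial is nonzero, then at least one summand is nonzero: there is a subset $F_0 \S F$ with $\crd(F_0) = r$ and $\underset{g \in F_0}{\smile} \Act_g^*(\et) \neq 0$. This $F_0$ is precisely the subset demanded by Definition~\ref{Def_mcid_Rev}, and its cardinality $r$ matches the ratio appearing in condition~(\ref{Item_D_0907_mcids_Large}).

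With this in hand, I would verify admissibility directly. Given a globally fixed $\cU$ and $\et$ witnessing $\mcidgs_k(\Act;R) \ge d$, I take $Y = X$ and keep the same $\cU$ and $\et$, noting that $\calD_Y(\cU) = \calD_X(\cU) \in \{k, k+1\}$ and $\et \in \ch^k(Y; \cU; R)$, as required; since this cover is independent of~$\ep$, it can be reused for every $\ep > 0$ in Definition~\ref{Def_mcid_Rev}. For a given finite $G_0 \S G$ and $\dt > 0$, I apply the defining property of $\mcidgs_k$ with $G_0$ and with $\min(\ep, \dt)$ in place of its invariance parameter, obtaining a $(G_0, \min(\ep,\dt))$-invariant --- hence $(G_0, \dt)$-invariant --- finite set~$F$ and an~$r$ with $\sm_r\bigl((\Act_g^*(\et))_{g\in F}\bigr) \neq 0$ and $kr/\crd(F) > d - \min(\ep,\dt) \ge d - \ep$. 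Extracting $F_0$ as above and using that $\bigcap_{g \in F_0} \Act_g^{-1}(Y) = X$ (so the restriction in the asymmetric cup-product condition is vacuous), both conditions of Definition~\ref{Def_mcid_Rev} hold, with $k\,\crd(F_0)/\crd(F) = kr/\crd(F) > d - \ep$.

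There is no serious obstacle here; the content is entirely in the elementary-symmetric-polynomial identity and the bookkeeping of quantifiers. The one point requiring care is the quantifier alignment: the symmetric definition fixes the cover before quantifying over $G_0$ and~$\ep$, while the asymmetric one reselects the cover for each~$\ep$ but uses a separate inner parameter~$\dt$ for invariance, so I must feed $\min(\ep,\dt)$ into the symmetric condition in order to meet the invariance requirement and the density estimate simultaneously.
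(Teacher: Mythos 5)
Your proof is correct and follows the same route as the paper's: restrict to $Y = X$ and observe that a nonzero elementary symmetric polynomial must have a nonzero term, which supplies the subset $F_0$ with $\crd(F_0) = r$ required by Definition~\ref{Def_mcid_Rev}. The only difference is that you spell out the quantifier bookkeeping (feeding $\min(\ep,\dt)$ into the symmetric definition), which the paper leaves implicit.
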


\begin{proof}
The quantity in Definition~\ref{Def_mcid_Rev}
cannot become larger when we impose the restriction $Y = X$.
Given that restriction,
the inequality to be proved follows from the fact that
if $\sm_r \bigl( (T_g^{*} (\et) )_{g \in F} \bigr) \neq 0$,
then at least one of its terms must be nonzero,
that is, there is a subset $F_0 \S F$ with $\crd (F_0) = r$
and such that the cup product of $T_g^{*} (\et)$ over all $g \in F_0$
is nonzero.
\end{proof}

In the following proposition,
the assumption that $R$ is a principal ideal
domain is needed in order to use the K{\"u}nneth Formula.

\begin{Prop}\label{Prop_Symm_mcid_subshifts}
Let $G$ be a countable amenable group.
Let $R$ be a principal ideal domain.
Let $k$ be an even integer,
and let $Z$ be a finite CW-complex with $\dim (Z) \in \{ k, k + 1 \}$
and $\ch^k (Z; R) \neq 0$.
Let $X$ be a closed $G$-invariant subset of~$Z^G$,
and let $\Act$ be the restriction to $X$
of the shift action of $G$ on~$Z^G$.
Suppose that there is an $X$-unconstrained subset of~$G$
(Definition~\ref{D_0908_Full})
with density at least~$\rh$ (Definition~\ref{D_0908_Density}).
Then $\mcidgs_k (\Act; R) \geq k \rho$.
\end{Prop}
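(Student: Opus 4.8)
The plan is to run the argument of Proposition~\ref{Prop_mcid_for_subshifts}, but working over the whole space $X$ (as Definition~\ref{D_0904_Def_St_mcid} demands), using a fixed nonzero class on $Z$ in place of a sphere, and replacing the single cup product by the top elementary symmetric polynomial. I would first reduce to the case $k \ge 2$ (for $k = 0$ the bound is trivial) and, after translating $J$ by a group element (which alters neither its density nor its unconstrainedness), arrange $1 \in J$. Let $q \colon Z^G \to Z$ be the projection onto the coordinate $g = 1$ and $q_X = q|_X$; the witness $z$ provides a section of $q_X$ obtained by changing the first coordinate, so $q_X$ is onto. Since $\ch^k(Z;R) \ne 0$ is finitely generated over the PID~$R$, I would use the structure theorem to choose $\eta_0 \in \ch^k(Z;R)$ to be a generator of a free cyclic summand, or, in the torsion case, of a summand $R/(\pi^e)$ with $\pi$ prime; this choice is the crucial point (see below). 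As in Propositions~\ref{Prop_mcid_for_full_shifts} and~\ref{Prop_mcid_for_subshifts}, I would pick a finite open cover $\cV$ of $Z$ with $\eta_0 \in \ch^k(Z; \cV; R)$ and $\ord (\cV) \le \dim (Z) \le k + 1$, then set $\cU = q_X^{-1} (\cV)$ and $\eta = q_X^* (\eta_0) \in \ch^k (X; \cU; R)$. Pulling $\eta_0$ back through the coordinate section shows $\eta \ne 0$, so $\calD_X (\cU) \ge k$, while $\calD_X (\cU) \le \ord (\cU) \le \ord (\cV) \le k+1$; hence $\calD_X (\cU) \in \{ k, k+1 \}$, as required.

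The core of the proof is the claim that for every finite $F \S G$, writing $I = F^{-1} \cap J$ and $n = \crd (I) = \crd (F \cap J^{-1})$, the element $\sm_n\bigl( (T_g^* (\eta))_{g \in F} \bigr)$ is nonzero. To prove it I would use the witness to define a continuous section $t \colon Z^I \to X$ by $t(w)_h = w_h$ for $h \in I$ and $t(w)_h = z_h$ otherwise; since $I \S J$, unconstrainedness gives $t(w) \in X$. Because $T_g^* (\eta) = (q_X \circ T_g)^* (\eta_0)$ is the pullback of $\eta_0$ along the projection onto the coordinate $g^{-1}$, the ring homomorphism $t^*$ sends $T_g^* (\eta)$ to $\pi_{g^{-1}}^* (\eta_0)$, the pullback of $\eta_0$ under the coordinate projection $\pi_{g^{-1}} \colon Z^I \to Z$, when $g^{-1} \in I$, that is $g \in F \cap J^{-1}$, and to $0$ otherwise, using $k \ge 2$. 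Applying $t^*$ to $\sm_n$, every size-$n$ monomial containing a killed factor vanishes, and the unique surviving monomial is $\prod_{h \in I} \pi_h^* (\eta_0) = \eta_0^{\times n}$, the external $n$-fold product.

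The nonvanishing of $\eta_0^{\times n}$ is where the hypotheses on $R$ and the choice of $\eta_0$ enter. By the K{\"u}nneth formula over the PID~$R$, $\eta_0^{\times n} \ne 0$ if and only if $\eta_0^{\otimes n} \ne 0$ in $\ch^k (Z; R)^{\otimes n}$; and my choice of $\eta_0$ guarantees this, since the $n$-fold tensor power of a generator of a summand $R$ or $R/(\pi^e)$ is again a generator of $R^{\otimes n} \cong R$ or $(R/(\pi^e))^{\otimes n} \cong R/(\pi^e)$, which is a direct summand of $\ch^k(Z;R)^{\otimes n}$. Thus $t^* \sm_n\bigl( (T_g^* (\eta))_{g \in F} \bigr) = \eta_0^{\times n} \ne 0$, so $\sm_n\bigl( (T_g^* (\eta))_{g \in F} \bigr) \ne 0$, proving the claim; this gives condition~(\ref{Item_D_0907_mcids_Prod}) of Definition~\ref{D_0904_Def_St_mcid} with $r = n$.

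Finally, for condition~(\ref{Item_D_0907_mcids_Large}) I must, given finite $G_0 \S G$ and $\ep > 0$, produce a $(G_0, \ep)$-invariant $F$ with $k\crd(F \cap J^{-1})/\crd(F) > k\rho - \ep$. The one subtlety is that the detected coordinates are indexed by $J^{-1}$ rather than $J$; I would dispose of it using the inversion-invariance of upper Banach density in countable amenable groups (equivalently, the agreement of the left and right densities), which gives $\delta(J^{-1}) = \delta(J) \ge \rho$. Hence there is a F{\o}lner sequence along which $J^{-1}$ has density arbitrarily close to~$\rho$; a sufficiently late term $F$ of it is $(G_0, \ep)$-invariant and satisfies $\crd(F \cap J^{-1})/\crd(F) > \rho - \ep/k$, which together with the claim yields $\mcidgs_k (\Act; R) \ge k\rho$. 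I expect the main obstacle to be the nonvanishing of $\eta_0^{\times n}$ in the presence of torsion: in contrast to the sphere case, where $\ch^k(S^k;R) \cong R$ is free of rank one, an arbitrary nonzero class on $Z$ may satisfy $\eta_0^{\otimes 2} = 0$, so the structure-theorem choice of $\eta_0$ is essential; by comparison the $J$-versus-$J^{-1}$ bookkeeping is a routine technical point.
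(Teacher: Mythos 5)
Your proposal follows the paper's proof of Proposition~\ref{Prop_Symm_mcid_subshifts} essentially step for step: the same cover $\cU = q^{-1}(\cV) \cap X$ and class $\et = (q|_X)^*(\et_0)$, the same section $t$ built from the witness~$z$, the same use of the K\"{u}nneth formula to identify the image of the top elementary symmetric polynomial under $t^*$ with an external power of $\et_0$, and the same passage to F{\o}lner sets meeting the unconstrained set in the right proportion. One of your two deviations is a genuine improvement: the paper takes an arbitrary nonzero $\et_0 \in \ch^k(Z;\cV;R)$ and asserts $\et_0 \otimes \cdots \otimes \et_0 \neq 0$, which can fail in the presence of torsion (e.g.\ $\et_0 = 2$ in $\Z/4\Z$ has $\et_0 \otimes \et_0 = 0$); your structure-theorem choice of $\et_0$ as a generator of a cyclic summand, free or of prime-power order, is exactly the right repair, since the tensor power is then a generator of a cyclic summand of $\ch^k(Z;R)^{\otimes n}$ and the K\"{u}nneth injection finishes the argument.

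Your second deviation is where the one gap sits. You are right that with the convention $\Act_g(x)_h = x_{g^{-1}h}$ one has $\Act_g^*(\et) = \pi_{g^{-1}}^*(\et_0)|_X$, so the coordinates detected by $\sm_r\bigl((\Act_g^*(\et))_{g \in F}\bigr)$ are indexed by $F^{-1}$ and the surviving count is $\crd(F \cap J^{-1})$; the paper's ``naturality'' step silently identifies $F$ with $F^{-1}$ and takes $r = \crd(F \cap J)$. But your resolution --- that $\dt(J^{-1}) = \dt(J)$ by inversion-invariance of upper Banach density --- is asserted, not proved, and it is not obvious from Definition~\ref{D_0908_Density}. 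That definition is a supremum over \emph{left} F{\o}lner sequences; the sequence $(F_n^{-1})$ that transports the density of $J$ to $J^{-1}$ is right F{\o}lner, not left F{\o}lner, so what you get for free is only that the \emph{right} upper density of $J^{-1}$ is at least $\rh$. Equality of left and right upper Banach densities is clear for abelian groups, and the issue disappears if the density $\geq \rh$ of $J$ is witnessed along a two-sided F{\o}lner sequence, but for a general countable amenable group and a general left F{\o}lner witness this needs a proof, a reference, or a workaround (e.g.\ strengthening the hypothesis, or fixing the shift/pullback conventions so that the detected coordinates lie in $J$ rather than $J^{-1}$). To be fair, the paper's own proof contains the same unaddressed issue inside its naturality step; you have at least made it visible.
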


\begin{proof}
Let $J \subset G$ be $X$-unconstrained with witness~$z$
and $\dt (J) \geq \rh$.
By translation,
we may assume without loss of generality that $1 \in J$.
If $k \rh = 0$ there is nothing to prove, so assume that $k \rh > 0$.
In particular, $k \geq 2$.

Choose an open cover $\cV$ of~$Z$
such that $\ch^k (Z; \cV; R) \neq 0$,
and let $\et_0 \in \ch^k (Z; \cV; R)$ be nonzero.
Let $q \colon Z^G \to Z$ be the projection onto the coordinate $g = 1$.
Then $q (X) = Z$.
Set $\cU = q^{-1} (\cV) \cap X$, and set $\eta = (q |_X)^* (\eta_0)$.

We claim that for any finite set $F \subset G$,
if we set $r = \crd (F \cap J)$,
then $\sm_r \bigl( (T_g^{*} (\et) )_{g \in F} \bigr) \neq 0$.
To prove the claim, first set $n = \crd (F)$.
Define maps
\[
t \colon Z^{F \cap J} \to X
\andeqn
p \colon X \to Z^{F}
\]
as follows.
Take $p$ to be the restriction of the projection map
\[
Z^{F} \times Z^{G \smallsetminus F} \to Z^{F}.
\]
For $x \in Z^{F \cap J}$
define
\[
t (x)_g = \begin{cases}
   x_g & \hspace*{1em} g \in F \cap J
        \\
   z_g & \hspace*{1em} g \in G \smallsetminus (F \cap J).
\end{cases}
\]
The fact that $z$ is a witness for the unconstrainedness of~$J$
implies that $t (x)$ as defined here really is in~$X$.

Enumerate the elements of $F$ as
$g_1, g_2, \ldots, g_n$,
with $g_1, g_2, \ldots, g_r \in F \cap J$.
The K\"{u}nneth Formula for \v{C}ech cohomology
(applied to finite CW~complexes)
gives an injective unital ring homomorphism
\[
\io \colon \bigotimes_{j = 1}^{n} \ch^{*} (Z; R) \to \ch^{*} (Z^F; R)
\]
such that, with $q_j \colon Z^F \to Z$
being the projection to the $g_j$-th coordinate,
and with $\mu_j \in \ch^{*} (Z; R)$ for $j = 1, 2, \ldots, n$,
we have
\[
\io (\mu_1 \otimes \mu_2 \otimes \cdots \otimes \mu_n)
= q_1^* (\mu_1) \smile q_2^* (\mu_2) \smile \cdots \smile q_n^* (\mu_n).
\]
Similarly,
we get an injective unital ring homomorphism
\[
\io_0 \colon
  \bigotimes_{j = 1}^{r} \ch^{*} (Z; R) \to \ch^{*} (Z^{F \cap J}; R).
\]
For $j = 1, 2, \ldots, n$,
set
\[
\ld_j
 = q_{j}^* (\et_0)
 = \io (1 \otimes 1 \otimes \cdots \otimes 1
 \otimes \et_0 \otimes 1 \otimes \cdots \otimes 1),
\]
with $\et_0$ in position~$j$.
We have
\[
\sm_r (\ld_1, \ld_2, \ldots, \ld_n)
 = \sum_{1 \leq j_1 < j_2 < \cdots < j_r \leq n}
   \ld_{j_1} \smile \ld_{j_2} \smile \cdots \smile \ld_{j_r}.
\]
Let $j \in \{ r + 1, \, r + 2, \, \ldots, n \}$
(the set indices
corresponding to the elements of $F \smallsetminus (F \cap J)$).
Then,
since $q_{j} \circ p \circ t$
is the constant map with value $z_{g_j}$,
we have
\[
(p \circ t)^* (\ld_j)
 = (q_{j} \circ p \circ t)^* (\et_0)
 = 0.
\]
Therefore
\begin{equation}\label{Eq_0907_ptstar}
\begin{split}
(p \circ t)^* \bigl( \sm_r (\ld_1, \ld_2, \ldots, \ld_n) \bigr)
& = (p \circ t)^* ( \ld_{1} \smile \ld_{2} \smile
    \cdots \smile \ld_{r})
\\
& = \io_0 (\et_0 \otimes \et_0 \otimes \cdots \otimes \et_0)
\end{split}
\end{equation}
(with $r$ tensor factors in the last expression).
Since $\et_0 \otimes \et_0 \otimes \cdots \otimes \et_0 \neq 0$,
the expression~(\ref{Eq_0907_ptstar}) is nonzero.
By naturality,
we have
\[
\sm_r \bigl( (\Act_g^* (\et))_{g \in F} \bigr)
 = p^* \bigl( \sm_r (\ld_1, \ld_2, \ldots, \ld_n) \bigr).
\]
So
$t^* \bigl( \sm_r \bigl( (\Act_g^* (\et))_{g \in F} \bigr) \bigr)
 \neq 0$,
whence $\sm_r \bigl( (T_g^{*} (\et) )_{g \in F} \bigr) \neq 0$,
as claimed.

We have $\calD_Z (\cV) \leq k + 1$ because $\dim (Z) \leq k + 1$,
and it follows that $\calD_Z (\cU) \leq k + 1$.
Since $k \rh > 0$,
we have $J \neq \varnothing$.
Choosing any finite subset $F \S G$ with $F \cap J \neq \varnothing$,
the claim certainly implies $\et \neq 0$.
Since $\et \in \ch^k (X; \cU; R)$,
this implies $\calD_Z (\cU) \geq k$.

Now let $G_0 \S G$ be finite and let $\ep > 0$.
It follows from Definition~\ref{D_0908_Density} that
there is a nonempty finite $(G_0, \ep)$-invariant subset $F \S G$
such that
\[
\frac{\crd (J \cap F)}{\crd (F)} > \rh - \frac{\ep}{k}.
\]
Then, using the claim for the second equation,
the number $r = \crd (J \cap F)$ satisfies
\[
\dfrac{k r}{\crd (F)} > k \rh - \ep
\andeqn
\sm_r \bigl( (T_g^{*} (\et) )_{g \in F} \bigr) \neq 0.
\]
This completes the proof.
\end{proof}

\begin{Thm}\label{T_0907_gs}
Let $G$ be a countable amenable group,
let $X$ be a compact metrizable space,
and let $\Act$ be an action of $G$ on $X$.
Let $k$ be an even integer.
Let $m$ be the greatest integer
with $m < \frac{1}{2} \mcidgs_k (\Act; \Q)$.
Then $\rc (C (X) \rtimes_{\Act} G) \geq m$.
If $C (X) \rtimes_{\Act} G$ is simple,
then $\rc (C (X) \rtimes_{\Act} G) > m$.
\end{Thm}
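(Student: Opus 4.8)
The plan is to follow the architecture of the proof of Theorem~\ref{T_0826_Main}, exploiting one structural simplification that is responsible for the improved constant. Because the cover $\cU$ in Definition~\ref{D_0904_Def_St_mcid} is a cover of all of~$X$ and $\eta \in \ch^k(X;\cU;\Q)$ is a \emph{global} class, the bundle $E$ produced from $\eta$ via Lemma~\ref{Lemma:zero lower classes} (pulled back along a map $f\colon X\to\cN(\cV)$ representing $\eta$, for a refinement $\cV \prec_X \cU$ with $\ord(\cV)=\calD_X(\cU)\le k+1$, so $\eta=f^*(\eta')$) lives over \emph{all} of~$X$, has rank $k/2$, and satisfies $c(E)=1+\eta$. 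Thus the relevant element $a$ may be taken to be an honest projection $q\in M_L(C(X))$ of constant rank $k/2$, rather than a positive element supported near a subspace; in particular $d_\tau(a)=k/2$ for every tracial state. I would fix an even $k\ge2$ (the case $k=0$ is vacuous, since then $\tfrac12\mcidgs_k(\Act;\Q)=0$ and the asserted bound is $\rc\ge -1$), set $b$ to be a constant projection of rank $m+k/2$, assume $a\precsim_{C(X)\rtimes_{\Act}G}b$, and aim for a contradiction.

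For the cutdown I would reuse verbatim the F{\o}lner machinery of Theorem~\ref{T_0826_Main}: from an approximate factorization $\|c^*bc-a\|<\tfrac14$ extract a finite $G_0$ and coefficients $c_g$, choose a $(G_0,\delta)$-invariant set $G_1\ni 1$, form $\Delta$, $d_0$ and $\td=d_0\otimes 1_{M_L}$ with $\|[\td,c]\|$ small, and pass to the corner over $K=\bigcap_{h\in G_1}hF$. Here the crucial difference appears: since $q$ is a projection over the whole of~$X$, every diagonal entry $\alpha_{g^{-1}}(q)$ is the projection onto $\Act_g^*(E)$, so the compression $p\,\td^{3/2}a\td^{3/2}\,p$ is the projection onto the bundle $\widetilde V=\bigoplus_{g\in K}\Act_g^*(E)$ over all of~$X$, with \emph{no} restriction to an intersection $\bigcap_g\Act_g^{-1}(Y)$, and $\widetilde V$ embeds into a constant projection of rank $\crd(G_1F)\cdot(m+k/2)$. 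Using $\eta^2=f^*((\eta')^2)=0$ (as $\dim\cN(\cV)\le k+1<2k$), whence $(\Act_g^*(\eta))^2=0$, I obtain for a trivial complement $\widetilde V\oplus E'$ the clean formula
\[
c(E')=c(\widetilde V)^{-1}=\prod_{g\in K}\bigl(1-\Act_g^*(\eta)\bigr)
 =\sum_{r'\ge 0}(-1)^{r'}\sm_{r'}\bigl((\Act_g^*(\eta))_{g\in K}\bigr),
\]
whose top nonzero term is $(-1)^{r^*}\sm_{r^*}$ with $r^*=\max\{r':\sm_{r'}((\Act_g^*(\eta))_{g\in K})\neq 0\}$, forcing $\rank(E')\ge kr^*/2$ and hence $\crd(G_1F)\,(m+k/2)\ge \tfrac k2\bigl(\crd(K)+r^*\bigr)$. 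Note that $\tfrac k2\,\crd(K)/\crd(G_1F)\approx k/2$ exactly cancels $d_\tau(a)=k/2$, which is what removes the $-k/2$ loss of Theorem~\ref{T_0826_Main} and produces the factor $\tfrac12$.

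The step I expect to be the main obstacle is relating the symmetric polynomial supplied by Definition~\ref{D_0904_Def_St_mcid}, which is nonzero over~$F$, to nonvanishing over the slightly smaller set~$K$. In Theorem~\ref{T_0826_Main} the analogous passage from $F_0$ to $F_0\cap K$ was a mere cardinality count, because a cup product factors; a symmetric polynomial does not, so a little more is needed. The key is the splitting identity for disjoint unions,
\[
\sm_r\bigl((\Act_g^*(\eta))_{g\in F}\bigr)
 =\sum_{i=0}^{r}\sm_{r-i}\bigl((\Act_g^*(\eta))_{g\in K}\bigr)
   \smile\sm_i\bigl((\Act_g^*(\eta))_{g\in F\smallsetminus K}\bigr),
\]
whose left-hand side is nonzero by the defining property of $\mcidgs_k$; hence some summand is nonzero, yielding an $i\le\crd(F\smallsetminus K)$ with $\sm_{r-i}((\Act_g^*(\eta))_{g\in K})\neq 0$, so that $r^*\ge r-\crd(F\smallsetminus K)>r-\eps_1\crd(F)$. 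Combining this with the F{\o}lner estimates $\crd(K)>(1-\eps_1)\crd(F)$, $\crd(G_1F)<(1+\eps_1)\crd(F)$ and $kr/\crd(F)>\mcidgs_k(\Act;\Q)-\eps_0$ (arranged exactly as in~\eqref{Eq_0830_CardEst}), the inequality $\crd(G_1F)(m+k/2)\ge\tfrac k2(\crd(K)+r^*)$ forces $m>\tfrac12\mcidgs_k(\Act;\Q)-O(\eps)$, contradicting $m<\tfrac12\mcidgs_k(\Act;\Q)$ once the $\eps$'s are chosen small. Therefore $a\not\precsim b$. Since $d_\tau(b)-d_\tau(a)=m$ for every invariant trace, $C(X)\rtimes_{\Act}G$ fails $r$-comparison for all $r<m$, giving $\rc(C(X)\rtimes_{\Act}G)\ge m$; in the simple case the closedness of the set of $r$ with $r$-comparison (Proposition~6.3 of~\cite{Toms-rc}) upgrades this to a strict inequality, exactly as at the end of the proof of Theorem~\ref{T_0826_Main}.
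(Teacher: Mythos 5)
Your proposal is correct, and it reproduces the overall architecture of the paper's argument (a rank-$k/2$ bundle $E$ over all of $X$ with $c(E)=1+\eta$, the Chern-class obstruction to embedding $\bigoplus_g \Act_g^*(E)$ in a small trivial bundle, and the F{\o}lner cutoff), but it diverges from the paper at exactly the step you flag as the main obstacle, and resolves it differently. You compress onto $K=\bigcap_{h\in G_1}hF$ as in Theorem~\ref{T_0826_Main} and then transfer nonvanishing of $\sm_r$ from $F$ to $K$ via the splitting identity
\[
\sm_r\bigl((\Act_g^*(\eta))_{g\in F}\bigr)
 =\sum_{i=0}^{r}\sm_{r-i}\bigl((\Act_g^*(\eta))_{g\in K}\bigr)
   \smile\sm_i\bigl((\Act_g^*(\eta))_{g\in F\smallsetminus K}\bigr),
\]
which is valid here because $k$ is even, and which costs at most $\crd(F\smallsetminus K)<\eps_1\crd(F)$ in the degree of the surviving symmetric polynomial---a loss absorbed by the F{\o}lner estimates. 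The paper instead sidesteps the issue entirely: it replaces $F$ by $S=\bigcup_{h\in G_1}h^{-1}F$ in the convolution defining the cutoff function $\Delta=\frac{1}{\crd(G_1)}(\chi_{G_1}*\chi_S)$, so that $\Delta\equiv 1$ on all of $F$, the compression is taken over $F$ itself, and the hypothesis $\sm_r\bigl((\Act_g^*(\eta))_{g\in F}\bigr)\neq 0$ is used verbatim; the price is the slightly larger ambient corner $G_1S$, controlled by $\crd(G_1S\smallsetminus F)<\eps_1\crd(F)$. The two devices are numerically interchangeable: the paper's keeps the proof free of any new algebraic input, while yours requires the (elementary but genuine) disjoint-union identity for elementary symmetric polynomials in commuting even-degree classes. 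Everything else in your write-up---the observation that $\eta^2=0$ since $\eta$ is pulled back from a complex of dimension at most $k+1<2k$, the exact cancellation of $d_\tau(a)=k/2$ against $\frac{k}{2}\crd(K)/\crd(G_1F)\approx k/2$ that produces the improved constant, and the concluding trace computation together with Proposition~6.3 of~\cite{Toms-rc} in the simple case---matches the paper.
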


In particular,
$\rc (C (X) \rtimes_{\Act} G)
 \geq \frac{1}{2} \mcidgs_k (\Act; \Q) - 1$.

\begin{proof}[Proof of Theorem~\ref{T_0907_gs}]
Fix an even integer~$k$.
Fix $\eps_0 > 0$.

Choose a finite open cover $\cU$ of $X$ such that
$\calD_X (\cU) \in \{ k, \, k + 1 \}$
and a cohomology class $\eta \in \ch^k (X; \cU; \Q)$ such that
for any $\dt > 0$
and any finite subset $G_0 \subset G$
there exist a nonempty finite $(G_0, \delta)$-invariant subset $F$
and $r \in \{ 0, 1, 2, \ldots, \crd (F) \}$ satisfying:
\begin{enumerate}
\item\label{T_0907_gs_Nonzero_New}
Following Definition~\ref{N_0904_ElSymmP},
we have $\sm_r \bigl( (T_g^{*} (\et) )_{g \in F} \bigr) \neq 0$.
\item\label{T_0907_gs_Folner_New}
$\dfrac{k r}{\crd (F)} > \mcidgs_k (\Act; \Q) - \dt$.
\end{enumerate}
%
Arguing as in the proof of Theorem \ref{T_0826_Main},
after possibly replacing
$\eta$ by a nonzero scalar multiple of itself,
we can choose a vector bundle
$E$ over $X$ such that $\rank (E) = k/2$ and such that $c (E) = 1+\eta$.

Let $L$ be the dimension of some trivial bundle
which has $E$ as a direct summand, and
let $a \in M_L (C (X))$ be the projection onto $E$.
Let $b \in M_{\infty} (C (X))$ be a constant projection.
Suppose that $a \precsim_{C (X) \rtimes_{\Act} G} b$.
We are going to prove that
this implies
$\rank (b) \geq \frac{1}{2} ( \mcidgs_k (\Act; \Q) + k) - \ep_0$.

Arguing again as in the proof of Theorem \ref{T_0826_Main}
(in particular, possibly increasing~$L$),
we may assume that $b \in M_{L} (C (X))$
and that there is $c \in M_{L} (C (X) \rtimes_{\Act} G)$
in the algebraic crossed product
such that
\begin{equation}\label{Eq_0907_Symm_cbc}
\| c^* b c - a \| <  \frac{1}{4}.
\end{equation}
%
With $u_g \in C (X) \rtimes_{\Act} G$ being the standard
unitary corresponding to $g \in G$,
there are a finite subset $G_0 \subset G$
and elements $c_g \in M_L (C (X))$ for $g \in G_0$
such that $c = \sum_{g \in G_0} c_g (1_{M_L} \otimes u_g)$.
Choose $\delta  > 0$ such that
\[
\delta \cdot (\| c \| + 1) \cdot \sum_{g \in G_0} \|c_g\|
 < \frac{1}{16}.
\]

We proceed to construct a cutoff function,
where here we need to make choices
which are a bit different than those used in the proof of Theorem
\ref{T_0826_Main}.
Choose a nonempty finite $(G_0, \delta)$-invariant subset $G_1$ of $G$
with $1 \in G_1$.
Choose $\eps_1 > 0$ such that
\begin{equation}\label{0907_Symm_CardEst}
\frac{ \mcidgs_k (\Act; \Q) + k - \eps_0}{1 + \eps_1}
 > \mcidgs_k (\Act; \Q) + k - 2 \eps_0.
\end{equation}
Set
\[
\ep_2 = \min \left( \ep_0, \, \frac{\ep_1}{2 \crd (G_1)^2} \right).
\]
Find a finite nonempty
$\bigl( G_0 \cup G_1, \, \eps_2 \bigr)$-invariant
subset $F \subset G$
and $r \in \{ 0, 1, 2, \ldots, \crd (F) \}$ satisfying
%
\begin{equation}\label{Eq_0908_MatchDfn}
\sm_r \bigl( (T_g^{*} (\et) )_{g \in F} \bigr) \neq 0
\andeqn
\dfrac{k r}{\crd (F)} > \mcidgs_k (\Act; \Q) - \ep_0.
\end{equation}

Set
\[
S = \bigcup_{h \in G_1} h^{-1} F.
\]
Then
\begin{equation}\label{Eq_0907_Symm_FppEst}
\begin{split}
\crd (S \smallsetminus F)
& \leq \sum_{h \in G_1} \crd ( h^{-1} F \smallsetminus F)
  = \sum_{h \in G_1} \crd ( F \smallsetminus h F)
\\
& < \crd (G_1) \ep_2 \crd (F)
  \leq \ep_1 \crd (F).
\end{split}
\end{equation}

Let $\Delta \colon G \to [0, 1]$ be the function
\[
\Delta
 = \frac{1}{\crd (G_1)} (\chi_{G_1} * \chi_S).
\]
(We have replaced $F$ in the proof of Theorem~\ref{T_0826_Main}
with~$S$.)
By the same reasoning as there,
for $t \in G_0$ and $g \in G$ we have
\begin{equation}
\label{eqn_Delta_almost_invariant_Symm}
| \Delta (t^{-1} g) - \Delta (g) |
  \leq \frac{\crd \bigl( (t G_1 \smallsetminus G_1)
      \cup (G_1 \smallsetminus t G_1) \bigr)}{\crd (G_1)}
  < 2 \delta.
\end{equation}
Likewise, by similar reasoning to that
in the proof of Theorem~\ref{T_0826_Main},
we see that that $\Delta (g) = 1$ for all $g \in F$.
%
%

As in the proof of Theorem \ref{T_0826_Main},
let $\af$ be the corresponding action of $G$ on $C (X)$,
and view $C (X) \rtimes_{\Act}G $
as embedded in $B (l^2 (G) \otimes C (X))$ in the same way as there.
We define a multiplication operator
$d_0 \in B (l^2 (G) \otimes C (X))$ by
\[
(d_0 \xi) (g) = \Delta (g) \xi (g)
\]
for $g \in G$.
By~(\ref{eqn_Delta_almost_invariant_Symm}),
for any $t \in G_0$,
$\| u_t d_0 u_t^* - d_0 \| < 2 \delta$,
whence $\| u_t d_0 - d_0 u_t \| < 2 \delta$.
Set $d = d_0 \otimes 1_{M_L}$.
It follows that
\begin{equation}\label{Eq_0907_Symm_CommEst}
\| c d - d c \|
 < 2 \delta \sum_{g \in G_0} \|c_g\|
 < \frac{1}{8 (\| c \| + 1)}.
\end{equation}

Notice that $\supp (\Delta) = G_1 S$.
Thus, we can view $d  M_{L} (C (X) \rtimes_{\Act} G) d$
as included in $B \bigl( l^2 (G_1 S) \otimes M_L \otimes C (X) \bigr)$,
with $l^2 (G_1 S) \otimes M_L \otimes C (X)$
regarded as a Hilbert $M_L \otimes C (X)$-module.
Since $G_1 S$ is a finite set,
this is a matrix algebra over $C (X)$.

Since, in particular,
$F$ is $( G_1, \eps_2 )$-invariant,
and since
\[
G_1 S \smallsetminus F
 = \bigcup_{g, h \in G_1} (g h^{-1} F \smallsetminus F)
 \S \bigcup_{g, h \in G_1}
   \bigl[ g (h^{-1} F \smallsetminus F)
             \cup (g F \smallsetminus F) \bigr],
\]
we have
\begin{equation}\label{Eq_0907_Symm_FpF}
\crd (G_1 S \smallsetminus F)
 \leq 2 \crd (G_1)^2 \ep_2 \crd (F)
 < \eps_1 \crd (F).
\end{equation}
Set
\[
c' = d^{1/2} c d^{1/2}
   \in B \bigl( l^2 (G_1 F) \otimes M_L \otimes C (X) \bigr).
\]
Then, arguing as in the proof of Theorem~\ref{T_0826_Main}, we have
\[
\bigl\| (c')^* d^{1/2} b d^{1/2} c'
- d^{3/2} a d^{3/2} \bigr\| < \frac{1}{2}.
\]
%
%
Under our identification of $a$
as an element in the crossed product,
$d^{3/2} a d^{3/2}$ is a diagonal operator
on the Hilbert $M_L \otimes C (X)$-module
$l^2 (G_1 F) \otimes M_L \otimes C (X)$,
and for any $g \in F$, since $\Delta (g) = 1$,
the $g$-th diagonal entry
is simply $(\alpha_{g^{-1}} \otimes \id_{M_L}) (a)$.
Likewise, as $b$ is invariant under the group action,
$d^{3/2}b d^{3/2}$ is a diagonal matrix
whose diagonal entries
are scalar multiples of the constant projection $b$.

Let $p \in B \bigl( l^2 (G_1 S) \otimes M_L \otimes C (X) \bigr)$
be the diagonal projection
whose diagonal $g$-th entry is $1$ if $g \in F$ and zero otherwise.
We obtain
\[
\bigl\| p d^{3/2} a d^{3/2} p
        - p (c')^* d^{1/2} b d^{1/2} c' p \bigr\|
 < \frac{1}{2}.
\]
Note that $p d^{3/2} a d^{3/2} p = p a p$.
Thus, the projection $p a p$
is Murray-von-Neumann subequivalent
to the cutdown of $b$ to
$B \bigl( l^2 (G_1 S) \otimes M_L \otimes C (X) \bigr)$,
which is a constant projection of rank $\crd (G_1 S)\cdot \rank (b)$.
Notice that
$p a p$ is the projection onto
${\widetilde{E}} = \bigoplus_{g \in F} \Act_g^* (E)$.
Now,
\[
c \bigl( {\widetilde{E}} \bigr)
 = {\displaystyle{ \underset{g \in F}{\smile}
       (1 + \Act_g^* (\eta)) }},
\]
so if ${\widetilde{E}} \oplus E'$ is a trivial bundle, then
\[
c (E')
 = c \bigl( {\widetilde{E}} \bigr)^{-1}
 = {\displaystyle{
    \underset{g \in F}{\smile}
       (1 - \Act_g^* (\eta)) }}
 = \sum_{j = 0}^{\crd (F)}
    (-1)^j \sm_j \bigl( (T_g^* (\et))_{g \in F} \bigr).
\]
In particular,
\[
c_{k r / 2} (E')
 = (-1)^r \sm_r \bigl( (T_g^* (\et))_{g \in F} \bigr)
 \neq 0,
\]
so $\rank (E') \geq k r / 2$.
Therefore ${\widetilde{E}}$ does not embed
in a trivial bundle of rank less than
\[
\frac{k r}{2} + \rank \bigl( {\widetilde{E}} \bigr)
 = \frac{k}{2} ( r + \crd (F) ).
\]
So $\crd (G_1 S) \cdot \rank (b) \geq \frac{k}{2} ( r + \crd (F) )$,
whence, using (\ref{Eq_0907_Symm_FpF}) at the second step,
(\ref{Eq_0908_MatchDfn}) at the fourth step,
and (\ref{0907_Symm_CardEst}) at the fifth step,
\[
\begin{split}
\rank (b)
& \geq \frac{k}{2} \left( \frac{r + \crd (F)}{\crd (G_1 S)} \right)
  > \frac{k}{2}
     \left( \frac{r + \crd (F)}{(1 + \ep_1) \crd (F)} \right)
\\
& = \frac{1}{2}
      \left( \frac{\frac{k r}{\crd (F)} + k}{1 + \ep_1} \right)
  > \frac{1}{2}
     \left( \frac{\mcidgs_k (\Act; \Q) - \eps_0 + k}{1 + \eps_1} \right)
\\
& > \frac{1}{2} \bigl( \mcidgs_k (\Act; \Q) + k \bigr) - \eps_0,
\end{split}
\]
as wanted.

Now recall that $m$ is the greatest integer
with $m < \frac{1}{2} \mcidgs_k (\Act; \Q)$.
Choose $\ep_0 > 0$ such that
$\frac{1}{2} \mcidgs_k (\Act; \Q) - \ep_0 > m$.
In the argument above,
use this value of $\ep_0$,
and take $b$ to be a constant projection with
$\rank (b) = m + \frac{k}{2}$.
Then $a \not\precsim_{C (X) \rtimes_{\Act} G} b$.
For any invariant tracial state $\tau$ on $C (X)$,
and hence for any tracial state $\tau$ on $C (X) \rtimes_{\Act} G$,
we have $d_{\tau}(b) = \rank (b)$
and $d_{\tau}(a) = \rank (a) = k/2$.
So $C (X) \rtimes_{\Act} G$ does not
have $m$-comparison.
Thus, $\rc (C (X) \rtimes_{\Act} G) \geq m$.

The argument for $\rc (C (X) \rtimes_{\Act} G) > m$
when $C (X) \rtimes_{\Act} G$ is simple
is the same as in the proof of Theorem~\ref{T_0826_Main}.
\end{proof}

\begin{Cor}\label{C_0907_Shift_Sk}
Let $k$ be a strictly positive even integer,
let $Z$ be a $k$-dimensional polyhedron
such that $\ch^k (Z; \Q) \neq 0$,
let $G$ be a countable amenable group,
and let $\rh \in (0, 1)$.
Let $(X, \Act)$ be the minimal subshift of the shift on $Z^G$
constructed in Section~4 of~\cite{Dou2017}
to satisfy $\mdim (\Act) = k \rh$.
Then $\rc (C (X) \rtimes_{\Act} G) > \frac{1}{2} \mdim (T) - 1$.
\end{Cor}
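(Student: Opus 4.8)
The plan is to chain together the three results already established for Dou's subshift, doing no new estimation: Proposition~\ref{P_0908_ExistFull} (existence of a dense unconstrained set), Proposition~\ref{Prop_Symm_mcid_subshifts} (the resulting lower bound on symmetric mean cohomological independence dimension), and Theorem~\ref{T_0907_gs} (the radius of comparison estimate). The only real work is checking that the hypotheses line up and handling the final integer rounding.

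First I would observe that, since $Z$ is a $k$-dimensional polyhedron, $\dim (Z) = k$, so the normalization $\mdim (\Act) = k \rh$ in the statement is exactly $\mdim (\Act) = \rh \dim (Z)$, which is the hypothesis of Proposition~\ref{P_0908_ExistFull}. That proposition then supplies an $X$-unconstrained subset $J \S G$ with $\dt (J) \geq \rh$.

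Next I would apply Proposition~\ref{Prop_Symm_mcid_subshifts} with $R = \Q$ (a field, hence a principal ideal domain). Its hypotheses all hold: $\dim (Z) = k \in \{ k, k + 1 \}$, $\ch^k (Z; \Q) \neq 0$ by assumption, $X$ is a closed $G$-invariant subset of $Z^G$, and the set $J$ from the previous step is $X$-unconstrained of density at least~$\rh$. The conclusion is $\mcidgs_k (\Act; \Q) \geq k \rh = \mdim (\Act)$.

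Finally I would feed this into Theorem~\ref{T_0907_gs}. Let $m$ be the greatest integer with $m < \tfrac{1}{2} \mcidgs_k (\Act; \Q)$; then $m \geq \tfrac{1}{2} \mcidgs_k (\Act; \Q) - 1 \geq \tfrac{1}{2} k \rh - 1 = \tfrac{1}{2} \mdim (\Act) - 1$. Because the subshift is minimal and the action is free, the crossed product $C (X) \rtimes_{\Act} G$ is simple, so the strict clause of Theorem~\ref{T_0907_gs} gives $\rc (C (X) \rtimes_{\Act} G) > m \geq \tfrac{1}{2} \mdim (\Act) - 1$, as required. The one point that genuinely needs care, and which I expect to be the only obstacle, is this strict inequality: the non-simple bound yields only $\rc \geq m$, and when $\tfrac{1}{2} k \rh$ happens to be an integer one has $m = \tfrac{1}{2} k \rh - 1$, so without simplicity one could conclude only $\rc \geq \tfrac{1}{2} \mdim (\Act) - 1$. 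Thus strictness really does depend on invoking simplicity, via minimality together with freeness of Dou's action, so that the strict clause of Theorem~\ref{T_0907_gs} applies.
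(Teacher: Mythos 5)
Your proposal is correct and is essentially identical to the paper's proof: Proposition~\ref{P_0908_ExistFull} supplies the unconstrained set $J$, Proposition~\ref{Prop_Symm_mcid_subshifts} (with $R = \Q$) gives $\mcidgs_k (\Act; \Q) \geq k \rh$, and Theorem~\ref{T_0907_gs} then yields the bound. Your closing observation about strictness is well taken: the paper's proof simply asserts the strict inequality, and in the case where $\tfrac{1}{2} k \rh$ is an integer this does rest on the simplicity clause of Theorem~\ref{T_0907_gs}, i.e.\ on simplicity of $C (X) \rtimes_{\Act} G$ for Dou's minimal subshift, exactly as you point out.
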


\begin{proof}
Let $J \subset G$ be as in Proposition~\ref{P_0908_ExistFull}.
This proposition implies that
the hypotheses of Proposition~\ref{Prop_Symm_mcid_subshifts}
are satisfied,
so $\mcidgs_k (\Act; \Q) \geq k \rh$.
Theorem~\ref{T_0907_gs} now gives
$\rc (C (X) \rtimes_{\Act} G) > \frac{1}{2} k \rh - 1$.
As in~\cite{Dou2017},
we have $\mdim (\Act) = k \rh$.
\end{proof}

In particular,
if $k$ is even, then the subshifts of $(G, (S^k)^G )$
in \cite{Dou2017} satisfy
$\rc (C (X) \rtimes_{\Act} G) > \frac{1}{2} \mdim (T) - 1$.
This is within~$1$ of the conjectured value
of $\rc (C (X) \rtimes_{\Act} G)$.

\section{Concluding remarks}\label{Sec_0922_CR}

For minimal subshifts as in Section \ref{Sec_0908_Symm},
we can bound the
radius of comparison of the crossed product
$\rc (C (X) \rtimes_{\Act} G)$ from
below by the largest integer smaller than $k\rho/2$,
where $\rho$ is the density as in that example.
For suitable choices of $\rho$, this can be arbitrarily
close to $k\rho/2$.
The fact that we only get integers is a
consequence of our method of proof.
We know of no reason not to believe that the following
may have an affirmative answer.

\begin{Question}
Let $T$ be a topologically free and minimal action
of a countable amenable
group $G$ on a compact metrizable space $X$.
Do we have
\[
\frac{1}{2}\mdim(\Act) \geq \rc (C (X) \rtimes_{\Act} G)
 \geq \frac{1}{2}\mcid(\Act ; \Q) \; ?
\]
\end{Question}

For the lower bound, minimality does not seem to be relevant.

In Proposition~\ref{Prop_mcid_for_subshifts},
we proved that if $(X, G)$ is a subshift of $Z^G$
which has an $X$-unconstrained set $J \S G$ with density~$\rh$,
then
\[
\mcid (\Act |_X; R)
 \geq 2 \rho \left\lfloor \frac{\dim(Z) - 1}{2} \right\rfloor.
\]
Proposition 2.8 of \cite{Krieger2009}
gives a related estimate with mean dimension
in place of mean cohomological independence dimension.
While this does not show that they coincide,
it does mean that for reasonable spaces they are not far apart,
and suggests that they may coincide under reasonable conditions.

\begin{Question}\label{Q_0826_Coeffs}
Does mean cohomological independence dimension
coincide with mean dimension for subshifts
under the hypotheses of Proposition \ref{Prop_mcid_for_subshifts}?
Does this depend on the ring of coefficients?
\end{Question}

The space $Z$ in Proposition \ref{Prop_mcid_for_subshifts}
is a finite CW-complex.
Bad spaces may well behave quite differently.
Scattered in~\cite{DranishnikovSurvey},
one can find various examples of strange behavior
of cohomological dimension in products,
different for different coefficient rings,
and differences between cohomological dimension and covering dimension.
We don't know the mean cohomological independence dimension
of shifts or subshifts on badly behaved spaces.
(For the mean dimension of shifts
on finite dimensional badly behaved spaces, see~\cite{tsukamuto}.
That paper leaves open the mean dimension of the shift on,
for example, a compact space $X$ with $\dim (X) = \I$
but integer cohomological dimension $\dim_{\Z} (X) = 3$.
Shifts on such spaces are perhaps more likely to exhibit
strange behavior of mean cohomological independence dimension.)

\end{document}